\newcommand{\bfI}{\boldsymbol I}
\newcommand{\bfV}{\boldsymbol V}
\newcommand{\mcK}{\mathcal{K}}
\newcommand{\mcE}{\mathcal{E}}
\newcommand{\mcF}{\mathcal{F}}
\newcommand{\mcT}{\mathcal{T}}
\newcommand{\mcH}{\mathcal{H}}
\newcommand{\mcv}{{H}}
\newcommand{\Sigmah}{{\Sigma_h}}
\newcommand{\nablas}{\nabla_\Sigma}
\newcommand{\nablash}{\nabla_{\Sigma_h}}
\newcommand{\lp}{\left(}
\newcommand{\rp}{\right)}
\newcommand{\tn}{|\mspace{-1mu}|\mspace{-1mu}|}
\numberwithin{equation}{section}
\newtheorem{lem}{Lemma}[section]
\newtheorem{thm}{Theorem}[section]
\newtheorem{rem}{Remark}[section]
\newenvironment{proof}{\noindent \newline {\bf Proof.}}
{\hfill \mbox{\fbox{} } \newline}
\begin{document}
\title{\bf Stabilized Finite Element Approximation
of the Mean Curvature Vector on Closed Surfaces
\thanks{This research was supported in part by the Swedish Foundation for Strategic Research Grant No.\ AM13-0029, and the Swedish Research Council Grants Nos.\ 2011-4992 and 2013-4708.}
}
 \author{
 Peter Hansbo\footnote{Department of Mechanical Engineering, J\"onk\"oping University, SE--551~11   J\"onk\"oping, Sweden, Peter.Hansbo@jth.hj.se } \mbox{ }
{Mats~G.~Larson} \footnote{Department of Mathematics and Mathematical Statistics, Ume{\aa} University, SE--901~87~~Ume{\aa}, Sweden, mats.larson@math.umu.se}
 \mbox{ }
{Sara~Zahedi} \footnote{Department of Mathematics, KTH, SE--100~44~~Stockholm, Sweden, sara.zahedi@math.kth.se}
}
\maketitle
\begin{abstract} We develop a stabilized discrete Laplace-Beltrami
operator that is used to compute an approximate mean curvature vector
which enjoys convergence of order one in $L^2$.
The stabilization is of gradient jump type and we consider both standard
meshed surfaces and so called cut surfaces that are level sets of
piecewise linear distance functions. We prove a priori error estimates and
verify the theoretical results numerically.
\end{abstract}
\section{Introduction} Accurate computation of the mean curvature vector
on a discrete surface plays an important role in computer graphics and
computational geometry, as well as in certain surface evolution problems,
see, e.g. \cite{BoKoPaAlLe10,BoSo08,CeHaLa14,DeMeScBa99,Dz91, Dz08}.

The mean curvature vector is obtained by letting the Laplace-Beltrami operator act on the embedding of the surface in ${\bf R}^3$ and various
formulas has been suggested in the literature, see \cite{MeyDesSchBa03}
and the references therein. It is known that the standard mean curvature vector based on the finite
element discrete Laplace-Beltrami operator for a piecewise linear triangulated smooth surface is of first order in $H^{-1}$, while no
order of convergence can, in general, be expected in $L^2$.
Convergence will also not occur in other standard methods, for
instance of finite difference type, without restrictive assumptions
on the mesh, see \cite{Xu13}. In \cite{GrRe07} estimates of order
$h^{1/2}$ in a $H^{-1/2}$ type norm, motivated by surface tension applications, is derived for an embedded interface defined by a
levelset function. Pointwise convergence results, without any factor
of the meshsize, was presented in \cite{HilPolWar06}.

In this paper we develop a stabilized version of the discrete Laplace-Beltrami operator. The stabilization consists of adding suitably scaled
gradient jumps to the $L^2$ projection involved in the definition of
the standard discrete Laplace-Beltrami operator. The stabilized method
produces a mean curvature vector that enjoys first order convergence in $L^2$. We consider two different types of piecewise linear approximations
of smooth surfaces. The first is the standard unstructured triangulation
and the second is a so called cut level set surface, which is the zero
level set of a piecewise linear continuous approximation of the distance function defined on a background mesh consisting of tetrahedra. In the
cut case an additional stabilization term on the faces of the background
mesh plays a crucial role. Such terms were originally proposed and
analyzed in \cite{BuHaLa13}. We prove a priori error estimates in the
$L^2$-norm for both cases and we also illustrate the results with
numerical examples.

The outline of the remainder of the paper is as follows:
In Section 2 we introduce the discrete surface approximations, in
Section 3 we define the stabilized mean curvature vector, in
Section 4 we develop the
theoretical framework and prove the a priori error estimate, and in
Section 5 we present numerical results confirming the theoretical
estimates.

\section{Meshed and Cut Discrete Surfaces}

\subsection{The Exact Surface}

Consider a closed smooth surface $\Sigma \subset {\bf R}^3$ with exterior
unit normal $n$. Let $\rho$ be the signed distance function such that
$\nabla \rho = n$ on $\Sigma$ and let $p(x) = x - \rho(x) n(p(x))$
be the closest point mapping. Let $U_\delta(\Gamma)$ be the open
tubular neighborhood 
$U_\delta(\Gamma) = \{x \in {\bf R}^3 : |\rho(x)|<\delta\}$ for $\delta>0$ of $\Sigma$. Then there is $\delta_0>0$
such that the closest point mapping $p(x)$ assigns precisely one point
on $\Sigma$ to each $x \in U_{\delta_0}(\Sigma)$. More precisely, we may  choose $\delta_0$ such that
\begin{equation}\label{eq:delta0}
\delta_0 \max(|\kappa_1(x)|,|\kappa_2(x)|) \leq C < 1 \quad \forall x \in \Sigma
\end{equation}
for some constant $C>0$. Here $\kappa_1(x)$ and $\kappa_2(x)$ are the principal curvatures at $x \in \Sigma$. See \cite{GiTr01}, Section 14.6
for further details.

\subsection{Approximation Properties}

We consider families of discrete connected piecewise linear surfaces
$\Sigma_h \subset U_{\delta_0}(\Sigma)$, where $0< h \leq h_0$ is a
mesh parameter and $h_0$ a small enough constant, that satisfy the
following approximation properties
\begin{align}\label{geomassum1}
\| \rho \|_{L^\infty(\Sigma_h)} & \lesssim h^2
\\ \label{geomassum2}
\| n\circ p  - n_h \|_{L^\infty(\Sigma_h)} &\lesssim h
\end{align}
Here and below we use the notation $\lesssim$ to denote less or
equal up to
a positive constant that is only dependent on given data and, in
particular, independent of the mesh parameter $h$.

We will consider two approaches to construct such piecewise linear
surfaces:
\begin{itemize}
\item Standard meshed surfaces where the surface consists of shape
regular triangles.
\item Cut surfaces that are piecewise planar iso--levels of a piecewise linear distance function defined on a background mesh consisting of
tetrahedra.
\end{itemize}
We shall treat meshed and cut surfaces in a unified
setting but certain concepts such as the mesh and later the
interpolation operator will be constructed in different ways.
However, the essential properties needed in the construction of the
Laplace-Beltrami operator and in the proof of the error estimate
are the same.

\subsection{Meshed Surface Approximation}

\begin{itemize}
\item Let $\Sigma_h = \cup_{K \in \mcK_h} K \subset U_{\delta_0}(\Sigma)$, be a quasiuniform triangulated surface with mesh parameter $0< h\leq h_0$, i.e.,
\begin{equation}
\text{Diam}(K)\lesssim h, \qquad \text{Diam}(K)/\text{diam}(K) \lesssim 1
\end{equation}
for all triangles $K$ in the mesh $\mcK_h$. Here $\text{Diam}(K)$ is the diameter
of $K$ and $\text{diam}(K)$ is the diameter of the largest inscribed circle in $K$.
\item Let $V_h$ be the space of piecewise linear continuous functions
defined on $\mcK_h$.
\end{itemize}

\subsection{Cut Surface Approximation}
\begin{itemize}
\item Let $\Omega_0$ be a polygon that contains $U_{\delta_0}(\Sigma)$.
Let $\mcT_{h,0}$ be a quasiuniform partition of $\Omega_0$
into shape regular tetrahedra $T$ with mesh parameter
$0< h \leq {h}_{\Omega_0}$, i.e.,
\begin{equation}
\text{Diam}(T)\lesssim h, \qquad \text{Diam}(T)/\text{diam}(T) \lesssim 1
\end{equation}
for all elements $T\in \mcT_{h,0}$. Let $\Sigmah \subset U_{\delta_0}(\Sigma)$
be a connected surface such that the intersection $\Sigmah \cap T$
is a subset of a hyperplane (or empty) for all $T\in \mcT_{h,0}$.
Let $\mcT_h = \{ T \in \mcT_{h,0} : T \cap \Sigmah \neq \emptyset\}$
and $\mcK_h = \{  \Sigmah \cap T : T \in \mcT_h \}$ and
let $h_0$, with $0< h_0\leq h_{\Omega_0}$, be chosen such that
$\cup_{T\in\mcT_h}T \subset U_{\delta_0}(\Sigma)$
for $0 < h \leq h_0$.
\item  Let $V_{h}$ be the space of piecewise linear continuous
functions on $\mcT_h$.
\end{itemize}
In practice, $\Sigmah$ is constructed by computing an approximation
$\rho_h$ of the levelset function $\rho$ associated with $\Sigma$
and then defining $\Sigmah$ as the zero levelset. Note that
$K \in \mcK_h$ will be a triangle or a planar quadrilateral.

\section{Stabilized Approximation of the Mean Curvature Vector}

\subsection{The Continuous Mean Curvature Vector}

The tangential gradient $\nabla_\Sigma$ is defined by
$\nabla_\Sigma = P_\Sigma \nabla$, where $\nabla$ is the
${\bf R}^3$ gradient and $P_\Sigma(x)
= I - n(x) \otimes n(x)$ is the projection onto the tangent plane $T_\Sigma(x)$ of $\Sigma$ at $x$.

The mean curvature vector $\mcv: \Sigma \rightarrow {\bf R}^3$
is defined by
\begin{equation}
\mcv = -\Delta_\Sigma x_\Sigma
\end{equation}
where $x_\Sigma: \Sigma \ni x \mapsto x \in {\bf R}^3$ is
the coordinate map or embedding of $\Sigma$ into ${\bf R}^3$ and
$\Delta_\Sigma = \nabla_\Sigma \cdot \nabla_\Sigma$ is the
Laplace-Beltrami operator. Note that for a general vector field 
$v:\Sigma \rightarrow {\bf R}^3$ the surface divergence 
$\text{div}_\Sigma v$ is defined by 
$\text{div}_\Sigma v=\text{tr}(v\otimes \nabla_\Sigma) 
=\text{tr}(v\otimes \nabla) - n \cdot (v \otimes \nabla )\cdot n$,  
and for tangent vector fields $v$ we have have the identity 
$\nabla_\Sigma \cdot v = \text{div}_\Sigma v$. 

The relation between the mean curvature vector and mean curvature is
given by the identity
\begin{equation}
\mcv = (\kappa_1 + \kappa_2)n
\end{equation}
where $\kappa_1$ and $\kappa_2$ are the two principal curvatures and
$(\kappa_1 +\kappa_2)/2$ is the mean curvature, see {\cite{BoKoPaAlLe10}}.

The mean curvature vector satisfies the following weak problem:
find $\mcv \in W=[H^1(\Sigma)]^3$ such that
\begin{equation}\label{meancurvature}
B(\mcv,v) = L(v) \quad \forall v \in W
\end{equation}
The forms are defined by
\begin{equation}
B(v,w) = (v, w)_\Sigma, \qquad L(w) = (\nablas x_{\Sigma}, \nablas w)_\Sigma
\end{equation}
where $\nablas w = w \otimes \nablas$ for a vector valued function $w$ and $(v,w)_\omega = \int_\omega v w dx$ is the $L^2$-inner product on the set $\omega$ with associated norm $\|v \|_\omega^2 = \int_\omega v^2 dx$.

We let $W^s_p(\omega)$ denote the standard Sobolev spaces on
$\omega \subseteq \Sigma$ or $\omega\subseteq {\bf R}^d$ with norm
$\|\cdot\|_{W^s_p(\omega)}$, see \cite{Wl87}. We also use the standard notation $W^s_p(\omega) = H^s(\omega)$ for $p=2$ and $W^0_p(\omega)
= L^p(\omega)$ for $s=0$. Since the surface is smooth we have the bound
\begin{equation}\label{regbound}
\| H \|_{W^s_\infty(\Sigma)} \lesssim 1
\end{equation}
for any choice of $s$ and we will, in particular, use this bound in our
analysis with $s=2$.

\subsection{The Stabilized Discrete Mean Curvature Vector}

Given the discrete coordinate map
$x_{\Sigma_h} : \Sigma_h \ni x \mapsto x \in {\bf R}^3$ we
define the stabilized discrete mean curvature vector
$\mcv_h$ as follows: find $\mcv_h \in W_h = [V_h]^3$ such that
\begin{equation}\label{meancurvaturedisc}
B_{h}(\mcv_h, v ) + J_h(\mcv_h,v)
= L_h ( v )
\quad \forall v \in W_h
\end{equation}
where the forms are defined by
\begin{align}
B_h(u,v) &= (u , v )_{\Sigma_h}
\\
L_h(v) &= (\nabla_{\Sigma_h} x_{\Sigma_h}, \nabla_{\Sigma_h} v )_{\Sigma_h}
\\
J_h(u,v) &=
\begin{cases}
\tau_{\mcE_h} J_{\mcE_h}(u,v)                   &\text{Meshed}
\\
\tau_{\mcE_h} J_{\mcE_h}(u,v)+\tau_{\mcF_h}J_{\mcF_h}(u,v)   &\text{Cut}
\end{cases}
\\
J_{\mcE_h}(u,v)&=\sum_{E \in \mcE_h}  h([t_E \cdot \nabla_{\Sigma_h} u],[t_E \cdot \nabla_{\Sigma_h} v])_E
\\
J_{\mcF_h}(u,v) &= \sum_{F \in \mcF_h} ([n_F \cdot \nabla u],[n_F \cdot \nabla v])_{F}
\end{align}
Here $\tau_{\mcE_h},\tau_{\mcF_h} \geq 0$ are parameters, $\mcE_h = \{E\}$ is the set of edges in the partition $\mcK_h$ of $\Sigma_h$, $\mcF_h=\{F\}$
is the set of interior faces in the partition $\mcT_{h}$. The jump
in the tangent gradient at an edge $E\in \mcE_h$ shared by
elements $K_1$ and $K_2$ in $\mcK_h$ is defined by
\begin{equation}
[t_E \cdot \nabla_{\Sigma_h} u] 
= t_{E,K_1} \cdot \nabla_{\Sigma_h} u_1 +  t_{E,K_2} \cdot \nabla_{\Sigma_h} u_2
\end{equation}
where $u_i = u|_{K_i}$, $i=1,2,$ and $t_{E,K_i}$ denotes the unit
vector orthogonal to $E$, tangent and exterior to $K_i$, $i=1,2.$ In the
same way the jump at a face $F \in \mcF_h$ shared by elements
$T_1$ and $T_2$ is defined by
\begin{equation}
[n_F \cdot \nabla u] = n_{F,T_1} \cdot \nabla u_1 +  n_{F,T_2} \cdot \nabla u_2
\end{equation}
where $n_{F,T_i}$ is the unit normal to the face $F$ exterior to element $T_i$, $i=1,2.$
\begin{rem}
The term $J_{\mcF_h}(u,v)$ is crucial in the cut case and enables us to essentially handle the cut case in the same way as the meshed case. 
It also stabilizes the possibly ill conditioned linear system of 
equations, see \cite{BuHaLa13}. In Theorem \ref{thm:jump} we will show 
that in the cut case it is indeed possible to take $\tau_{\mcE_h}=0$ 
and thus only add $J_{\mcF_h} (\cdot,\cdot)$. It is however convenient 
for the analysis to first include both the edge and face stabilization 
terms and then prove that only the face stabilization term is enough.
\end{rem}

\section{Error Estimates}

\subsection{Extension and Lifting of Functions}

\paragraph{Extension.}
Using the nearest point projection mapping any function $v$ on
$\Sigma$ can be extended to $U_{\delta_0}(\Sigma)$ using the
pull back
\begin{equation}
v^e = v \circ p \quad \text{on $U_{\delta_0}(\Sigma)$}
\end{equation}
Since the surface is smooth we have the stability estimate 
\begin{equation}\label{eq:extstab}
\|u^e\|_{W^s_\infty(U_{\delta_0}(\Sigma))}
\lesssim \|u\|_{W^s_\infty(\Sigma)}
\end{equation}
for $s>0$. We will, in particular, use $s=2$ in our forthcoming 
estimates. Using the chain rule we obtain
\begin{equation}\label{Dvefirst}
D v^e = D(v\circ p ) = Dv Dp = Dv (P_\Sigma - \rho \mcH)
\end{equation}
Here we used the identity $Dp = P_\Sigma - \rho \mcH$, where $\mcH$ is
the Hessian of the distance function
$\mcH= \nabla \otimes \nabla \rho$. For $x\in U_{\delta_0}(\Sigma)$ we
have
\begin{equation}\label{Hform}
\mcH(x) = \sum_{i=1}^2 \frac{\kappa_i^e}{1 + \rho(x)\kappa_i^e}a_i^e \otimes a_i^e
\end{equation}
where $\kappa_i$ are the principal curvatures with corresponding
principal curvature vectors $a_i$, see \cite{GiTr01} Lemma 14.7. 
Thus, using the bound (\ref{eq:delta0}) for $\delta_0$ we obtain
\begin{equation}\label{Hbound}
\|\mcH\|_{L^\infty(U_{\delta_0}(\Sigma))} \lesssim 1
\end{equation}
Starting from (\ref{Dvefirst}) we obtain
\begin{equation}
\nablash v^e = \nablash (v \circ p) 
= \nabla ( v \circ p) \cdot P_\Sigmah
= \nabla v \cdot D p P_\Sigmah 
= \nablas v \cdot P_\Sigma (P_\Sigma - \rho \mcH) P_\Sigmah
\end{equation}
where we used the fact that $P_\Sigma - \rho \mcH
= P_\Sigma(P_\Sigma - \rho \mcH)$, which follows from (\ref{Hform}). For each element $K \subset \Sigma_h $ and  $x \in K$ the resulting mapping
\begin{equation}\label{Bmap}
B = P_\Sigma (I - \rho \mcH)  P_\Sigmah : T_{x}(K)\rightarrow
T_{p(x)} (\Sigma)
\end{equation}
is invertible and we have the identity
\begin{equation}\label{Dve}
\nablash v^e = B^T \nablas v
\end{equation}

\paragraph{Lifting.}
The lifting $w^l$ of a function $w$ defined on $\Sigma_h$ to
$\Sigma$ is defined as the push forward
\begin{equation}
(w^l)^e = w^l \circ p = w \quad \text{on $\Sigma_h$}
\end{equation}
Using the chain rule we obtain
\begin{align}\label{Dwl}
D w &= D (w^l \circ p)  =  (D w^l)Dp
= (D w^l) (P_\Sigma - \rho \mcH)
\end{align}
and thus
\begin{align}
\nablash w &= \nabla (w^l \circ p) \cdot P_\Sigmah  =  \nabla (w^l) \cdot D p P_\Sigmah
\\
&\qquad = (\nabla w^l )\cdot (P_\Sigma - \rho \mcH) P_\Sigmah
= (\nablas w^l) \cdot P_\Sigma (P_\Sigma - \rho \mcH) P_\Sigmah
= (\nablas w^l) \cdot B
\end{align}
where $B$ is defined in (\ref{Bmap}). We obtain
\begin{equation}\label{Dvl}
\nablas w^l = B^{-T} \nablash w
\end{equation}

\paragraph{Estimates Related to $\boldsymbol{B}$.}
In order to prepare for the proof of the error estimate we collect some estimates
related to $B$. First
\begin{equation}\label{BBTbound}
 \| P_\Sigma - B B^T \|_{L^\infty(\Sigma)} \lesssim h^2,
 \quad \| B \|_{L^\infty(\Sigma_h)} \lesssim 1
 \quad \| B^{-1} \|_{L^\infty(\Sigma)} \lesssim 1
\end{equation}
Secondly we note that the surface measure $d \Sigma = |B| d \Sigmah$,
where $|B|$ is the absolute value of the determinant of $[B \xi_1 \, B \xi_2\, n^e]$ and $\{\xi_1,\xi_2\}$ is an orthonormal basis in $T_x(K)$,
and we have the following estimates
\begin{equation}\label{detBbound}
\| 1 - |B| \|_{L^\infty(\Sigma_h)} \lesssim h^2, \quad \||B|\|_{L^\infty(\Sigma_h)} \lesssim 1, \quad \||B|^{-1}\|_{L^\infty(\Sigma_h)} \lesssim 1
\end{equation}
see \cite{BuHaLa13} and \cite{De09}. In view of these bounds we note that
we have the following equivalences
\begin{equation}\label{eq:normequ}
\| v^l \|_{L^p(\Sigma)}
\sim \| v \|_{L^p(\Sigmah)}, \qquad \| v \|_{L^p(\Sigma)} \sim
\| v^e \|_{L^p(\Sigmah)}
\end{equation}
and
\begin{equation}\label{eq:normequgrad}
\| \nabla_\Sigma v^l \|_{L^p(\Sigma)} \sim \| \nablash v \|_{L^p(\Sigmah)},
\qquad \| \nablas v \|_{L^p(\Sigma)} \sim
\| \nablash v^e \|_{L^p(\Sigmah)}
\end{equation}

\subsection{Error Estimate for the Discrete Embedding}

Here we formulate an estimate of the difference between
the embeddings of the discrete and continuous surfaces.

\begin{lem}\label{lem:geomapprox} If the surface approximation
assumptions (\ref{geomassum1}) and (\ref{geomassum2}) hold, 
then
\begin{equation}
\| x_\Sigma^e - x_\Sigmah \|^2_{L^\infty(\Sigmah)}
+ h^2\| \nablash(x_\Sigma^e - x_\Sigmah) \|^2_{L^\infty(\Sigmah)}
\lesssim  h^4
\end{equation}
\end{lem}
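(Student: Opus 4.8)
The plan is to reduce both terms to pointwise algebraic bounds, exploiting the fact that the coordinate maps are essentially identity maps. Since $x_\Sigma$ is the identity embedding of $\Sigma$, its extension evaluated at $x\in\Sigmah$ is $x_\Sigma^e(x) = x_\Sigma(p(x)) = p(x)$, while $x_\Sigmah(x)=x$. Inserting the definition $p(x) = x - \rho(x)\,n(p(x))$ of the closest point map, the difference collapses to
\begin{equation*}
x_\Sigma^e - x_\Sigmah = p - x = -\rho\,(n\circ p).
\end{equation*}
Since $n\circ p$ is a unit vector, the first term is controlled directly by the distance estimate (\ref{geomassum1}): $\| x_\Sigma^e - x_\Sigmah \|_{L^\infty(\Sigmah)} = \|\rho\|_{L^\infty(\Sigmah)} \lesssim h^2$, and squaring yields the required $h^4$.

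For the gradient term I would compute $\nablash(x_\Sigma^e - x_\Sigmah)$ explicitly. Applying the identity (\ref{Dve}) with $v = x_\Sigma$, together with the fact that the tangential gradient of the identity embedding is the tangential projection, $\nablas x_\Sigma = P_\Sigma$, gives $\nablash x_\Sigma^e = B^T P_\Sigma = B^T$, because $B^T = P_\Sigmah(I - \rho\mcH)P_\Sigma$ already terminates in $P_\Sigma$ and $P_\Sigma^2 = P_\Sigma$. On the other hand $\nablash x_\Sigmah = P_\Sigmah$. Thus the entire gradient reduces to the matrix difference $B^T - P_\Sigmah$, and it suffices to establish $\|B^T - P_\Sigmah\|_{L^\infty(\Sigmah)} \lesssim h$, since the weight $h^2$ then produces $h^4$.

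The key algebraic step is the decomposition
\begin{equation*}
B^T - P_\Sigmah = P_\Sigmah\bigl[(I-\rho\mcH)P_\Sigma - I\bigr] = -P_\Sigmah\,(n^e\otimes n^e) - \rho\, P_\Sigmah\,\mcH\, P_\Sigma,
\end{equation*}
where I used $P_\Sigma - I = -n^e\otimes n^e$ with $n^e = n\circ p$. The second term is the easy one: by (\ref{geomassum1}) and the Hessian bound (\ref{Hbound}) it is $\lesssim \|\rho\|_{L^\infty(\Sigmah)}\|\mcH\|_{L^\infty} \lesssim h^2$. The first term is the delicate one and I expect it to be the main obstacle: since $\|n^e\|=1$ everything rests on estimating $P_\Sigmah\, n^e$, and a naive bound only gives $O(1)$. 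The crucial observation is the cancellation $P_\Sigmah\, n_h = 0$, which lets me write $P_\Sigmah\, n^e = P_\Sigmah\,(n^e - n_h)$ and invoke the normal approximation assumption (\ref{geomassum2}) to conclude $\|P_\Sigmah\, n^e\|_{L^\infty(\Sigmah)} \lesssim \|n\circ p - n_h\|_{L^\infty(\Sigmah)} \lesssim h$. Combining the two contributions gives $\|B^T - P_\Sigmah\|_{L^\infty(\Sigmah)} \lesssim h + h^2 \lesssim h$, whence $h^2\|\nablash(x_\Sigma^e - x_\Sigmah)\|^2_{L^\infty(\Sigmah)} \lesssim h^4$. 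Adding this to the bound on the first term completes the estimate.
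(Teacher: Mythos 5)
Your proposal is correct and takes essentially the same route as the paper: both handle the first term via $\|x_\Sigma^e - x_\Sigmah\|_{L^\infty(\Sigmah)} = \|\rho\|_{L^\infty(\Sigmah)} \lesssim h^2$, and both reduce the gradient term to the matrix difference $P_\Sigmah(P_\Sigma - \rho\mcH) - P_\Sigmah$ (your $B^T - P_\Sigmah$ is the same matrix, since $\mcH P_\Sigma = \mcH$ by (\ref{Hform})), split into an $O(h)$ projection-mismatch term controlled by (\ref{geomassum2}) and an $O(h^2)$ term controlled by (\ref{geomassum1}) and (\ref{Hbound}). Your cancellation $P_\Sigmah n^e = P_\Sigmah(n^e - n_h)$ is merely a more explicit rendering of the paper's bound on $P_\Sigmah(P_\Sigma - P_\Sigmah) = -P_\Sigmah(n^e \otimes n^e)$.
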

\begin{proof} For the first term we have
\begin{equation}
\| x_\Sigma^e - x_\Sigmah \|_{L^\infty(\Sigmah)}
=\| \rho \|_{L^\infty(\Sigmah)} \lesssim h^2
\end{equation}
where we used (\ref{geomassum1}). For the second term
we have the identities
\begin{equation}
\nablash x_\Sigma^e = P_\Sigmah (P_\Sigma - \rho \mcH ),
\qquad \nablash x_{\Sigma_h}
= P_{\Sigma_h}
\end{equation}
and thus
\begin{align}
\| \nablash x_\Sigma^e - \nablash x_\Sigmah \|_{L^\infty(\Sigma_h)}
&\leq \| P_\Sigmah ( P_\Sigma - \rho \mcH ) - P_\Sigmah  \|_{L^\infty(\Sigma_h)}
\\
&\leq \| P_\Sigmah ( P_\Sigma  - P_\Sigmah ) \|_{L^\infty(\Sigma_h)}
+ \| \rho P_\Sigmah \mcH \|_{L^\infty(\Sigma_h)}
\\
&\lesssim h
\end{align}
where we used (\ref{geomassum1}), (\ref{geomassum2}), and (\ref{Hbound}).
%where we used the identity $(B - P_\Sigmah)P_\Sigmah
%= ((I-\rho\mcH)P_\Sigma  P_\Sigmah - P_\Sigmah)P_\Sigmah
%= (P_\Sigma - P_\Sigmah)P_\Sigmah - \rho \mcH P_\Sigma  P_\Sigmah$.
\end{proof}
%\todo[inline]{Fix this proof with more details.}

%
%
%\subsection{Error Estimate for the Discrete Embedding}
%
%Here we formulate an estimate of the error in the embeddings of
%the discrete and continuous surface surfaces.
%\begin{lem}\label{lem:geomapprox} If the surface approximation
%assumptions (\ref{geomassum1}) and (\ref{geomassum2}) hold. Then
%\begin{equation}
%\| x_\Sigma - x_\Sigmah^l \|^2_{\Sigma}
%+ h^2\| \nablas(x_\Sigma - x_\Sigmah^l) \|^2_{\Sigma}
%\lesssim  h^4
%\end{equation}
%\end{lem}
%\begin{proof}
%We note that the first term can be directly estimated in terms of the max
%norm. For the second we have the identities
%\begin{equation}
%\nablas x_\Sigma = P_\Sigma, \qquad \nablas x^l_{\Sigma_h}
%= B^{-T} P_{\Sigma_h}
%\end{equation}
%and thus
%\begin{align}
%\| \nablas x_\Sigma - \nablas x_\Sigmah^l \|_{\Sigma}
%&\leq \| P_\Sigma  - B^{-T} P_\Sigmah  \|_{\Sigma}
%\\
%%&\leq \| P_\Sigma - P_\Sigmah + B^{-T}( B - P_\Sigmah)P_\Sigmah \|_{\Sigma}
%%\\
%&\leq \| P_\Sigma - P_\Sigmah\|_{\Sigma} + \|B^{-T}( B - P_\Sigmah)P_\Sigmah \|_{\Sigma}
%\\
%&\leq \| P_\Sigma - P_\Sigmah\|_{\Sigma} + \|B^{-T}( P_\Sigma - P_\Sigmah -\rho\ct)P_\Sigmah \|_{\Sigma}
%\\
%&\leq \| P_\Sigma - P_\Sigmah\|_{\Sigma} + \|B^{-T} ( P_\Sigma - P_\Sigmah -\rho\ct)P_\Sigmah \|_{\Sigma}
%\\
%&\lesssim h
%\end{align}
%where we used the identity $(B - P_\Sigmah)P_\Sigmah
%= ((I-\rho\mcH)P_\Sigma  P_\Sigmah - P_\Sigmah)P_\Sigmah
%= (P_\Sigma - P_\Sigmah)P_\Sigmah - \rho \mcH P_\Sigma  P_\Sigmah$.
%\end{proof}
%%\todo[inline]{Fix this proof with more details.}

\subsection{Some Inequalities}
In this section we formulate some useful inequalities. First
a trace inequality that allows passage from an edge
$E \in \mcE_h$ to a tetrahedron $T\in \mcT_h$ for cut surfaces.
Then we prove two inverse inequalities.
For convenience we introduce the
semi norms
\begin{equation}
\tn v \tn_{\mcE_h}^2 = J_{\mcE_h}(v,v), \quad
\tn v \tn_{\mcF_h}^2 = J_{\mcF_h}(v,v)
\end{equation}

\begin{lem}\label{lem:doubletrace} In the cut case we have the 
following trace inequality
\begin{equation}\label{cuttrace}
\| v \|^2_E \lesssim h^{-2} \| v \|^2_T + \| \nabla v \|_T^2
+ h^{2}\| \nabla \otimes \nabla v \|_T^2\quad v\in H^2(T)
\end{equation}
where $E\in \mcE_h$, $T \in \mcT_h$, and $E \subset \partial T \cap \Sigma_h$.
\end{lem}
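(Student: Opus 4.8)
The goal is to bound an $L^2$-norm on a two-dimensional edge $E$ by volume norms on a three-dimensional tetrahedron $T$. The edge $E$ lies in the planar piece $K = \Sigma_h \cap T$, so this is really a two-step trace argument: first pass from the edge $E$ (codimension one inside the surface piece $K$) to the surface piece $K$ itself, and then pass from $K$ (a planar slice, codimension one inside $T$) up to the tetrahedron $T$. I need to think about how the mesh-size powers $h^{-2}$, $1$, and $h^2$ arise from chaining two standard trace inequalities.

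\begin{proof}[Proof proposal]
The plan is to chain two scaled trace inequalities via the intermediate planar piece $K = \Sigma_h \cap T$, on which $E$ is an edge. First I would record the standard trace inequality on a shape-regular simplex in scaled form. For a face $K$ of (or a planar slice through) a tetrahedron $T$ of diameter $\sim h$, the trace of $w \in H^1(T)$ onto $K$ satisfies $\| w \|_K^2 \lesssim h^{-1}\|w\|_T^2 + h \|\nabla w\|_T^2$; this follows from mapping to a reference configuration, applying the fixed reference trace inequality, and tracking the Jacobian powers under scaling by $h$. Applying this once with $w = v$ gives $\|v\|_K^2 \lesssim h^{-1}\|v\|_T^2 + h\|\nabla v\|_T^2$, and applying it again with $w = \nabla v$ (componentwise) gives $\|\nabla v\|_K^2 \lesssim h^{-1}\|\nabla v\|_T^2 + h\|\nabla\otimes\nabla v\|_T^2$.

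Next I would perform the analogous trace step \emph{inside} the surface piece $K$, passing from the edge $E$ to $K$. Since $K$ is a planar triangle or quadrilateral of diameter $\sim h$ and $E\subset\partial K$ is one of its edges, the two-dimensional scaled trace inequality reads $\|v\|_E^2 \lesssim h^{-1}\|v\|_K^2 + h\|\nabla_{\Sigma_h} v\|_K^2$, where the tangential gradient along $K$ is controlled by the full gradient, $\|\nabla_{\Sigma_h}v\|_K \le \|\nabla v\|_K$. Substituting the two volume bounds from the first step into this edge-to-face estimate yields
\begin{equation}
\|v\|_E^2 \lesssim h^{-1}\big(h^{-1}\|v\|_T^2 + h\|\nabla v\|_T^2\big) + h\big(h^{-1}\|\nabla v\|_T^2 + h\|\nabla\otimes\nabla v\|_T^2\big),
\end{equation}
and collecting the powers of $h$ gives exactly $\|v\|_E^2 \lesssim h^{-2}\|v\|_T^2 + \|\nabla v\|_T^2 + h^2\|\nabla\otimes\nabla v\|_T^2$, which is the claim.

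The main subtlety, and the step I expect to require the most care, is the geometric uniformity underlying the constants. For the face-to-volume trace inequality I can rely on shape regularity of the tetrahedra in $\mcT_h$, which is assumed. For the edge-to-face trace inequality and for treating $K$ as a slice through $T$, however, $K = \Sigma_h\cap T$ is only guaranteed to be a planar triangle or quadrilateral whose shape is \emph{not} a priori controlled: a hyperplane can cut a tetrahedron in an arbitrarily thin sliver, so the intrinsic diameter-to-inradius ratio of $K$ and its edges $E$ may degenerate. Consequently the second trace step cannot simply invoke shape regularity of $K$; instead I would prove the edge-to-volume estimate by a direct argument that avoids reference to the shape of the cut. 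The cleanest route is to bound $\|v\|_E^2$ directly against the tetrahedron $T$ using a single trace inequality from the segment/edge $E$ to $T$, exploiting that $E$ lies on $\partial T$ or can be extended to meet a genuine face of the shape-regular simplex $T$; the powers of $h$ are then dictated purely by the diameter $\sim h$ of $T$, and the result is robust with respect to how $\Sigma_h$ cuts $T$. Verifying that this direct estimate reproduces precisely the powers $h^{-2}$, $1$, $h^2$ is the crux, and it is exactly the point at which the face stabilization $J_{\mcF_h}$ will later be needed to transfer control from arbitrary cut configurations back to the background mesh.
\end{proof}
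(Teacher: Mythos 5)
You have correctly diagnosed the fatal weakness of your own main route, but the repair you sketch is not a proof, and this is where the genuine gap lies. Chaining through $K=\Sigmah\cap T$ fails exactly as you suspect: $K$ can be an arbitrarily thin sliver, so the edge-to-$K$ trace $\|v\|_E^2\lesssim h^{-1}\|v\|_K^2+h\|\nablash v\|_K^2$ has no uniform constant (take $v\equiv 1$ on a sliver of width $\epsilon$: the left side is $\sim h$ while the right side is $\sim\epsilon$). Your fallback, a ``direct'' trace from the segment $E$ to the tetrahedron $T$, is never established and cannot be obtained by naively iterating standard scaled traces: $E$ has codimension two in $T$, and $H^1(T)$ functions have no trace on line segments, so a passage through some \emph{shape-regular two-dimensional} intermediate set is essentially forced. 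The paper supplies precisely this missing object, and it is not $K$: it is the background-mesh face $F\in\mcF_h$ with $E=F\cap\Sigmah$. Since $F$ is a genuine face of the shape-regular tetrahedron $T$ and $E$ is a straight segment cutting through $F$, the cut-trace inequality of Lemma 4.2 in \cite{HaHaLa03}, $\|v\|_E^2\lesssim h^{-1}\|v\|_F^2+h\|\nabla_F v\|_F^2$, holds with a constant uniform in the position of the cut; a standard scaled trace from $F$ to $T$, applied to $v$ and componentwise to $\nabla_F v$ (using that $P_F$ is constant, so $\|\nabla_F v\|_T\leq\|\nabla v\|_T$ and $\|(\nabla_F v)\otimes\nabla\|_T\leq\|\nabla\otimes\nabla v\|_T$), then produces exactly the powers $h^{-2}$, $1$, $h^2$ by the same bookkeeping you carried out. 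So your $h$-arithmetic is fine; what is missing is the identification of $F$ as the intermediate domain together with the cut-trace lemma that makes the first step uniform in the cut configuration.

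A secondary but real error is your closing claim that the face stabilization $J_{\mcF_h}$ is ``exactly'' what is needed at this point. The lemma is an unconditional statement about any $v\in H^2(T)$ and involves no finite element structure and no stabilization whatsoever. The stabilization enters only downstream, e.g.\ in Lemma \ref{lem:trace}, where sums of the form $\sum_{T\in\mcT_h}h^{-1}\|v\|_T^2$ arising from this trace inequality are absorbed into $\|v\|_{\Sigmah}^2+\tn v\tn_{\mcF_h}^2$ via Lemma 4.4 of \cite{BuHaLa13}; it plays no role in proving the trace inequality itself.
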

\begin{proof} We first apply the trace inequality
\begin{equation}
\| v \|^2_E \lesssim h^{-1} \| v \|^2_{F} + h \| \nabla_F v \|^2_F
\end{equation}
see Lemma 4.2 in \cite{HaHaLa03}, to pass from the edge $E$ to
the face $F = F(E)\in \mcF_h$ such that $E=F \cap \Sigma_h$. Then
we apply a standard trace inequality to pass from $F$ to an
element $T=T(F)\in \mcT_h$ to which $F$ is a face. More
precisely
\begin{align}
\| v \|_E^2 &\lesssim h^{-1} \| v \|_F^2 + h \| \nabla_F v \|^2_F
\\
&\lesssim  h^{-2} \| v \|_T^2 + \| \nabla v \|^2_T
+ \| \nabla_F v \|^2_T + h^2 \| (\nabla_F v)\otimes \nabla \|_T^2
\\
&\lesssim
h^{-2} \| v \|_T^2 + \| \nabla v \|^2_T
+ h^2 \| \nabla \otimes \nabla v \|_T^2
\end{align}
where $\nabla_F = P_F \nabla$, with $P_F = I - n_F \otimes n_F$
the constant projection onto the tangent plane of the face $F$,
is the tangent gradient to the face $K$. We also used the estimates
$\|\nabla_F v \|_T \lesssim \| \nabla v \|_T$ and $\|(\nabla_F v)\otimes \nabla \|_T =
\|(P_F \nabla v)\otimes \nabla \|_T
= \|P_F ((\nabla v)\otimes \nabla) \|_T \leq \| \nabla \otimes \nabla v \|_T^2$.
\end{proof}

\begin{lem}\label{lem:trace} The following inverse inequality holds
\begin{equation}
\sum_{E \in \mcE_h} h \| v \|^2_E \lesssim \| v \|^2_\Sigmah 
+ \tn v \tn^2_{\mcF_h} \quad \forall v \in V_h
\end{equation}
where $\tn v \tn^2_{\mcF_h}$ is present only in the cut case.
\end{lem}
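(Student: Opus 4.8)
The plan is to prove the inequality $\sum_{E \in \mcE_h} h \| v \|^2_E \lesssim \| v \|^2_\Sigmah + \tn v \tn^2_{\mcF_h}$ by treating the meshed and cut cases separately, since the structure of the underlying geometry differs. In the meshed case the face stabilization term $\tn v \tn^2_{\mcF_h}$ is absent, and the estimate is purely an inverse/trace inequality on the triangulated surface $\Sigma_h$. In the cut case we must pass from edges of $\mcK_h$ to the ambient tetrahedra of $\mcT_h$, and here the face term will be essential.

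\medskip

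\noindent\textbf{Meshed case.} First I would fix an edge $E \in \mcE_h$ shared by triangles $K_1$ and $K_2$, and apply the standard trace inequality $\| v \|^2_E \lesssim h^{-1} \| v \|^2_K + h \| \nabla_{\Sigma_h} v \|^2_K$ on each adjacent triangle $K = K_i$. Then I would invoke the local inverse inequality $\| \nabla_{\Sigma_h} v \|_K \lesssim h^{-1} \| v \|_K$, valid since $v|_K$ is a polynomial and $K$ is shape regular, to absorb the gradient term. This yields $\| v \|^2_E \lesssim h^{-1} \| v \|^2_K$, so that $h \| v \|^2_E \lesssim \| v \|^2_K$. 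Summing over all edges and using that each triangle has a bounded number of edges (shape regularity) gives $\sum_{E} h \| v \|^2_E \lesssim \| v \|^2_{\Sigma_h}$, which is the claim with no face term.

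\medskip

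\noindent\textbf{Cut case.} Here I would start from Lemma~\ref{lem:doubletrace}, which provides $\| v \|^2_E \lesssim h^{-2} \| v \|^2_T + \| \nabla v \|_T^2 + h^2 \| \nabla \otimes \nabla v \|_T^2$ for the tetrahedron $T$ with $E \subset \partial T \cap \Sigma_h$. Since $v \in V_h$ is piecewise linear on $\mcT_h$, the second-order term $\nabla \otimes \nabla v$ vanishes on each $T$, and a standard inverse inequality bounds $\| \nabla v \|_T \lesssim h^{-1} \| v \|_T$, so after multiplying by $h$ we obtain $h \| v \|^2_E \lesssim h^{-1} \| v \|^2_T$. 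Summing over edges reduces the problem to controlling $\sum_{T \in \mcT_h} h^{-1} \| v \|^2_T$ by $\| v \|^2_{\Sigma_h} + \tn v \tn^2_{\mcF_h}$. This is the heart of the matter and the expected main obstacle: the $L^2$ norm of $v$ over a full tetrahedron $T$ cannot be controlled by its norm over the small cut piece $K = T \cap \Sigma_h$ alone, because $K$ may be an arbitrarily thin sliver. The standard remedy, following \cite{BuHaLa13}, is a fat-covering / norm-extension argument: using the face stabilization $J_{\mcF_h}$ one can ``walk'' from any cut element $T$ through a uniformly bounded number of neighboring elements to one in which the cut $K$ has comparable measure, controlling the jumps in $\nabla v$ across the intervening faces $F \in \mcF_h$ by $\tn v \tn^2_{\mcF_h}$. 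This produces the bound $h^{-1}\|v\|_T^2 \lesssim \|v\|_{K}^2 + h \tn v \tn^2_{\mcF_h}$ element-wise, and summing gives $\sum_{T} h^{-1} \| v \|^2_T \lesssim \| v \|^2_{\Sigma_h} + \tn v \tn^2_{\mcF_h}$, completing the proof.

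\medskip

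\noindent The technically delicate point will be the passage from $T$ to $K=T\cap\Sigma_h$ in the cut case, since this is exactly where the geometry can degenerate and where the role of $J_{\mcF_h}$ becomes indispensable; I expect to either quote the relevant lemma from \cite{BuHaLa13} directly or reproduce its chain-of-neighbors argument. The meshed case, by contrast, is routine and follows from elementary trace and inverse estimates alone.
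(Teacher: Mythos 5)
Your proposal is correct and follows essentially the same route as the paper: a standard trace inequality plus an elementwise inverse estimate in the meshed case, and in the cut case Lemma~\ref{lem:doubletrace} followed by inverse inequalities to reduce to $\sum_{T \in \mcT_h} h^{-1}\|v\|_T^2$, which is then controlled by $\|v\|_{\Sigmah}^2 + \tn v \tn_{\mcF_h}^2$ via Lemma~4.4 of \cite{BuHaLa13} --- exactly the citation the paper itself uses. You correctly identified the passage from the tetrahedron $T$ to the possibly degenerate cut $K = T \cap \Sigmah$ as the crux and the precise role of the face stabilization there.
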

\begin{proof} In the meshed case we have
\begin{equation}
\sum_{E \in \mcE_h} h \| v \|^2_E
\lesssim
\sum_{K \in \mcK_h} \| v \|^2_K + h^2 \| \nablash v \|^2_K
\lesssim
\sum_{K \in \mcK_h} \| v \|^2_K
=
\| v \|^2_{\Sigmah}
\end{equation}
where we used a standard trace inequality followed by an inverse
estimate. In the cut case we use Lemma \ref{lem:doubletrace} to get
\begin{align}
\sum_{E \in \mcE_h} h \| v \|^2_E
&\lesssim \sum_{T \in \mcT_h} h^{-1} \| v \|^2_{T}
+ h \| \nabla v \|^2_{T}
+ h^3 \| \nabla \otimes \nabla v \|^2_{T}
\\
&\lesssim \sum_{T \in \mcT_h} h^{-1} \| v \|^2_{T}
\\
&\lesssim \| v \|^2_{\Sigmah} + \tn v \tn^2_{\mcF_h}
\end{align}
where we used standard inverse inequalities and at last
Lemma 4.4 in \cite{BuHaLa13}.
%
%In the cut case we first use a trace inequality, Lemma 4.2 in
%\cite{HaHaLa03}, to pass from the edge $E\in \mcE_h$ to the
%corresponding face $F=F(E)\in \mcF_h$ such that $F \cap \Sigmah = E$
%and then we use a standard trace inequality to pass from the face
%$F$ to the element $T\in \mcT_h$. More precisely
%\begin{align}
%&\sum_{E \in \mcE_h} h \| v \|^2_E
%\lesssim \sum_{F\in \mcF_h} \| v \|^2_{F} + h^2 \| \nabla_F v\|^2_{F}
%\lesssim \sum_{F \in \mcF_h} \| v \|^2_{F}
%\\
%&\qquad
%\lesssim \sum_{T \in \mcT_h} h^{-1} \| v \|^2_{T} + h \| \nabla v \|^2_{T}
%\lesssim \sum_{T \in \mcT_h} h^{-1} \| v \|^2_{T}
%\lesssim \Big( \| v \|^2_{\Sigmah} + J_{\mcF_h}(v,v) \Big)
%\end{align}
%where we used standard inverse inequalities and at last Lemma 3.3 in \cite{BuHaLa13}. Here $\nabla_F$ denotes that tangent gradient at the
%face $F$.
\end{proof}
\begin{lem}\label{lem:grad} The following inverse inequality holds
\begin{equation}
h^2 \| \nablash v \|^2_\Sigmah \lesssim \| v \|^2_\Sigmah 
+ \tn v \tn^2_{\mcF_h} \quad \forall v \in V_h
\end{equation}
where $\tn v \tn^2_{\mcF_h}$ is present only in the cut case.
\end{lem}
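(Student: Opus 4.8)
The plan is to treat the meshed and cut cases separately, just as in the proof of Lemma~\ref{lem:trace}. In the \emph{meshed case} there is no face term, and the estimate is immediate: since $v\in V_h$ is piecewise linear on the shape-regular triangulation $\mcK_h$, the standard inverse inequality $\|\nablash v\|_K^2 \lesssim h^{-2}\|v\|_K^2$ holds on each triangle $K\in\mcK_h$ with a constant depending only on the shape regularity. Summing over $K\in\mcK_h$ and multiplying by $h^2$ gives $h^2\|\nablash v\|_{\Sigmah}^2 \lesssim \|v\|_{\Sigmah}^2$, which is the claim.

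The \emph{cut case} is the substantive one, and the difficulty is that one cannot apply a surface inverse inequality on $K = \Sigmah\cap T$ directly: $K$ may be an arbitrarily thin sliver of the tetrahedron $T$, so $\|v\|_K$ carries no control over the gradient and the constant in $\|\nablash v\|_K^2\lesssim h^{-2}\|v\|_K^2$ would depend on how $\Sigmah$ cuts $T$. The key move is to pass from the surface piece to the ambient tetrahedron. Since $v$ is linear on each $T\in\mcT_h$, its gradient $\nabla v$ is constant on $T$, and since $K$ is planar the projection $P_{\Sigma_h}$, and hence $\nablash v = P_{\Sigma_h}\nabla v$, is constant on $K$ with $|\nablash v|\le|\nabla v|$ pointwise. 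Using that $K$ lies in a ball of radius $\lesssim h$ so that $|K|\lesssim h^2$, while quasiuniformity gives $|T|\sim h^3$, I would estimate
\begin{equation}
h^2\|\nablash v\|_K^2 = h^2\,|K|\,|\nablash v|^2 \lesssim h^2\,|K|\,|\nabla v|^2 = h^2\frac{|K|}{|T|}\,\|\nabla v\|_T^2 \lesssim h\,\|\nabla v\|_T^2 .
\end{equation}

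With this reduction in hand, the remaining steps are standard and mirror the proof of Lemma~\ref{lem:trace}. First, the elementwise inverse inequality on the shape-regular tetrahedron, $\|\nabla v\|_T^2 \lesssim h^{-2}\|v\|_T^2$, turns the right-hand side into $h^{-1}\|v\|_T^2$. Summing over $T\in\mcT_h$ yields $h^2\|\nablash v\|_{\Sigmah}^2 \lesssim \sum_{T\in\mcT_h} h^{-1}\|v\|_T^2$. Finally I would invoke Lemma~4.4 of~\cite{BuHaLa13}, the fat-domain stability estimate $\sum_{T\in\mcT_h} h^{-1}\|v\|_T^2 \lesssim \|v\|_{\Sigmah}^2 + \tn v\tn_{\mcF_h}^2$, which is exactly where the face stabilization does its work: it upgrades control of $v$ on the cut surface to control of $v$ on the whole band of intersected tetrahedra. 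Combining the three displays gives $h^2\|\nablash v\|_{\Sigmah}^2 \lesssim \|v\|_{\Sigmah}^2 + \tn v\tn_{\mcF_h}^2$.

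I expect the first move in the cut case to be the only real obstacle, namely avoiding the configuration-dependent surface inverse inequality by reducing to the ambient tetrahedron via $|\nablash v|\le|\nabla v|$ together with the scalings $|K|\lesssim h^2$ and $|T|\sim h^3$; once $h^2\|\nablash v\|_K^2\lesssim h\|\nabla v\|_T^2$ is in place, the bulk inverse inequality and the cited stabilization lemma finish the argument routinely. Note that the face-jump term enters solely through Lemma~4.4 of~\cite{BuHaLa13}, which explains why it appears on the right-hand side precisely in the cut case and is absent in the meshed case.
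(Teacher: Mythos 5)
Your proposal is correct and follows essentially the same route as the paper's proof: the meshed case via a standard elementwise inverse inequality, and the cut case via the constancy of $\nablash v$ on $K$, the measure ratio $\operatorname{meas}(K)/\operatorname{meas}(T)\lesssim h^{-1}$ to pass to the tetrahedron, a bulk inverse inequality, and finally Lemma~4.4 of \cite{BuHaLa13}. Your write-up merely makes explicit the scalings $|K|\lesssim h^2$, $|T|\sim h^3$ and the pointwise bound $|\nablash v|\le|\nabla v|$ that the paper uses implicitly.
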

\begin{proof} In the meshed case this estimate follows directly from a
standard elementwise inverse inequality. In the cut case we use the
fact that $(\nabla_\Sigmah v)|_K$ is constant
\begin{align}
h^2 \| \nablash v \|^2_\Sigmah &=\sum_{K\in\mcK_h} h^2 \| \nablash v \|_K^2
\\
&=\sum_{K \in\mcK_h} h^2 \text{meas}(K)(\text{meas}(T(K)))^{-1}
\| \nablash v \|_{T(K)}^2
\\
&\lesssim \sum_{T \in \mcT_h} h \|\nabla v \|^2_T
\\
&\lesssim \sum_{T \in \mcT_h} h^{-1} \| v \|^2_T
\\
&\lesssim \|v\|^2_{\Sigmah} + \tn v \tn^2_{\mcF_h} 
\end{align}
where $T(K)\in \mcT_h$ is the element such that $T \cap \Sigmah = K$ and we used standard inverse inequalities and at last Lemma 4.4 in \cite{BuHaLa13}.
\end{proof}

\subsection{Estimates for the Edge Stabilization Term}

In this section we prove two estimates for the edge stabilization 
term. The first shows that the edge stabilization term acting 
on an extension of a smooth function is $O(h^2)$. The second lemma 
is used in the proof of Theorem \ref{thm:jump} where we show that it 
is indeed enough to use the simplified stabilization $J_h(v,v) = \tau_{\mcF_h} J_{\mcF_h}(\cdot,\cdot)$ in the case of cut surfaces.

\begin{lem}\label{lem:tanjumpsmooth} If the surface approximation assumptions
(\ref{geomassum1}) and (\ref{geomassum2}) hold, then 
\begin{equation}\label{eq:tanjumpsmooth}
 \tn v^e \tn_{\mcE_h} 
 \lesssim h \| v \|_{W^2_\infty(\Sigma)}
\end{equation}
\end{lem}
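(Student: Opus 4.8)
The plan is to exploit the fact that for the extension $v^e$ of a smooth function the only source of discontinuity in the semi norm $\tn v^e\tn_{\mcE_h}$ is \emph{geometric}, namely the mismatch between the exterior edge tangents of neighbouring elements. Recalling that $\tn v^e\tn_{\mcE_h}^2=\sum_{E\in\mcE_h} h\,\|[t_E\cdot\nablash v^e]\|_E^2$, I would first observe that on each element $K_i$ the vector $t_{E,K_i}$ lies in the tangent plane of $K_i$, so $P_{\Sigma_h}t_{E,K_i}=t_{E,K_i}$, and since $P_{\Sigma_h}$ is a symmetric projection,
\[
t_{E,K_i}\cdot\nablash v^e_i = t_{E,K_i}\cdot P_{\Sigma_h}\nabla v^e = (P_{\Sigma_h}t_{E,K_i})\cdot\nabla v^e = t_{E,K_i}\cdot\nabla v^e,
\]
where $\nabla v^e$ is the full ${\bf R}^3$ gradient. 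Because $v^e=v\circ p$ is smooth in the neighbourhood $U_{\delta_0}(\Sigma)$, the gradient $\nabla v^e(x)$ is single valued at each $x\in E$ and hence common to the two elements sharing $E$. The jump therefore collapses to
\[
[t_E\cdot\nablash v^e] = (t_{E,K_1}+t_{E,K_2})\cdot\nabla v^e,
\]
so that all the discontinuity is carried by the purely geometric factor $t_{E,K_1}+t_{E,K_2}$.

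The heart of the argument is then the geometric estimate $|t_{E,K_1}+t_{E,K_2}|\lesssim h$. Both $t_{E,K_1}$ and $t_{E,K_2}$ are unit vectors orthogonal to the common edge $E$; when $K_1$ and $K_2$ are coplanar the exterior orientation forces $t_{E,K_1}=-t_{E,K_2}$, so the sum vanishes, and in general its magnitude is controlled by the dihedral angle between the two planar elements, i.e.\ by $|n_{h,1}-n_{h,2}|$, where $n_{h,i}$ is the constant normal of $K_i$. By the triangle inequality and the approximation assumption (\ref{geomassum2}),
\[
|n_{h,1}-n_{h,2}| \le \|n\circ p - n_{h,1}\|_{L^\infty(K_1)} + \|n\circ p - n_{h,2}\|_{L^\infty(K_2)} \lesssim h,
\]
which yields $|t_{E,K_1}+t_{E,K_2}|\lesssim h$ uniformly along $E$.

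It remains to assemble the bound. Using the extension stability (\ref{eq:extstab}) with $s=1$ together with $\|v\|_{W^1_\infty(\Sigma)}\le\|v\|_{W^2_\infty(\Sigma)}$, I bound the common factor by $|\nabla v^e|\lesssim\|v\|_{W^2_\infty(\Sigma)}$. Combined with the geometric estimate this gives the pointwise bound $|[t_E\cdot\nablash v^e]|\lesssim h\,\|v\|_{W^2_\infty(\Sigma)}$ on each edge, so that
\[
\tn v^e\tn_{\mcE_h}^2 = \sum_{E\in\mcE_h} h\,\|[t_E\cdot\nablash v^e]\|_E^2 \lesssim h^3\,\|v\|_{W^2_\infty(\Sigma)}^2 \sum_{E\in\mcE_h}|E| \lesssim h^2\,\|v\|_{W^2_\infty(\Sigma)}^2,
\]
where in the last step I use that the total edge length satisfies $\sum_{E\in\mcE_h}|E|\lesssim h^{-1}$, since there are $O(h^{-2})$ elements each of perimeter $\lesssim h$ in both the meshed and the cut case. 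Taking square roots yields (\ref{eq:tanjumpsmooth}).

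I expect the main obstacle to be the geometric estimate $|t_{E,K_1}+t_{E,K_2}|\lesssim h$: making precise that the sum of the two exterior edge tangents vanishes for coplanar elements and is otherwise proportional to the dihedral angle, and then relating that angle to the normal approximation error controlled by (\ref{geomassum2}). Everything else is essentially bookkeeping; the key conceptual simplification is the cancellation $t_{E,K_i}\cdot\nablash v^e = t_{E,K_i}\cdot\nabla v^e$, which reduces a jump of tangential derivatives to a mismatch of tangent vectors and explains why the edge stabilization of a smooth extension is only $O(h^2)$.
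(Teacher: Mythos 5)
Your proof is correct and follows essentially the same route as the paper: the jump collapses to $(t_{E,K_1}+t_{E,K_2})\cdot\nabla v^e$ since $\nabla v^e$ is single valued on $U_{\delta_0}(\Sigma)$, the tangent mismatch is bounded by $O(h)$ via the normal approximation assumption (\ref{geomassum2}), and summing over the $O(h^{-2})$ edges gives the factor $h^2$. The only cosmetic difference is that the paper makes your dihedral-angle step explicit through the identities $t_{E,K_1}=n_{h,1}\times e_E$ and $t_{E,K_2}=-n_{h,2}\times e_E$, inserting the exact tangent $t^e=(n^e\times e_E)^l$ as an intermediary, whereas you bound $|t_{E,K_1}+t_{E,K_2}|\leq |n_{h,1}-n_{h,2}|$ directly; both rest on the same cross-product observation.
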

\begin{proof} Consider the contribution $h \|[t_{E} \cdot \nablash v^e ]\|^2_E$ to $\tn v^e \tn^2_{\mcE_h}$ 
from edge $E\in \mcE_h$. Let $e_E$ be the unit vector parallel with the
edge $E$ such that $t_{E,K_1} = n_{h,1} \times e_E$ and $t_{E,K_2} = - n_{h,2} \times e_E$. Let $t=(n^e \times e_E)^l$ and $s = t \times n$.
Then $t$ and $s$ span the tangent plane $T_{p(x)}(\Sigma)$ for $x \in E$
and
\begin{align}
&\| t_{E,K_1} - t^e \|_{L^\infty(E)}
+ \| t^e + t_{E,K_2}\|_{L^\infty(E)}
\nonumber
\\ \label{tangenterrorest}
&\qquad = \|(n_{h,1} - n^e) \times e_E \|_{L^\infty(E)}
+ \|(n^e - n_{h,2}) \times e_E \|_{L^\infty(E)}
\lesssim h
\end{align}
We then have
\begin{align}
\| [t_{E} \cdot \nablash v^e ]\|_E
&=
\|(t_{E,K_1} + t_{E,K_2}) \cdot \nabla v^e \|_E
\\
&\leq \left( \|t_{E,K_1} - t^e\|_{L^\infty(E)}
        +  \|t^e +t_{E,K_2} \|_{L^\infty(E)} \right)
\| \nabla v^e\|_{E}
\\ \label{tanestb}
&\lesssim h \| \nabla v^e\|_{E}
\\
&\lesssim h^{3/2} \|v^e\|_{W^1_\infty(E)}
\\
&\lesssim h^{3/2} \| v \|_{W^1_\infty(\Sigma)}
\end{align}
where we used (\ref{tangenterrorest}) and the bound
$\|v^e\|_{W^1_\infty(E)} \lesssim
\| v^e \|_{W^1_\infty(U_{\delta_0}(\Sigma))}
\lesssim \| v \|_{W^1_\infty(\Sigma)}$, which 
follows from the stability (\ref{eq:extstab}) of extensions. 
Thus we obtain
\begin{equation}
\tn u^e \tn_{\mcE_h}^2 = \sum_{E \in \mcE_h} h \| [t_{E} \cdot \nablash u^e ]\|^2_E
\lesssim \sum_{E \in \mcE_h} h^4 \lesssim h^2
\end{equation}
since $\text{card}(\mcE_h) \lesssim h^{-2}$ both for meshed and
cut surfaces.
\end{proof}
\begin{lem}\label{lem:edgetermest} If the surface approximation
assumptions (\ref{geomassum1}) and (\ref{geomassum2}) hold, then 
the following bound holds for cut surfaces
\begin{equation}\label{eq:edgetermest}
\tn v \tn^2_{\mcE_h} \lesssim \| v \|^2_{\Sigma_h} + \tn v \tn^2_{\mcF_h}
\quad \forall v \in V_h
\end{equation}
\end{lem}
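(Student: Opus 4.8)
The plan is to decompose the tangential gradient jump $[t_E \cdot \nablash v]$ across each edge into one piece controlled by the mismatch of the discrete surface normals on the two adjacent elements and one piece controlled by the normal gradient jump across the background face carrying the edge, and then to bound these two pieces by $\|v\|^2_{\Sigmah}$ and $\tn v \tn^2_{\mcF_h}$ respectively. First I would fix the geometry: an interior edge $E \in \mcE_h$ shared by $K_1 = \Sigmah \cap T_1$ and $K_2 = \Sigmah \cap T_2$ lies on the interior face $F = T_1 \cap T_2 \in \mcF_h$, and the assignment $E \mapsto F$ is injective because $\Sigmah \cap F$ is a single segment and $\Sigma$ is closed. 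Since $v \in V_h$ is continuous across $F$, only the normal component of the full gradient can jump, so with $n_F = n_{F,T_1}$ one has $\nabla v_1 - \nabla v_2 = [n_F \cdot \nabla v]\, n_F$, where $v_i = v|_{T_i}$.

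Using that each $t_{E,K_i}$ is tangent to $K_i$, so that $t_{E,K_i}\cdot\nablash v_i = t_{E,K_i}\cdot\nabla v_i$, I would rewrite the jump as
\[
[t_E \cdot \nablash v] = (t_{E,K_1} + t_{E,K_2}) \cdot \nabla v_1 \;-\; [n_F \cdot \nabla v]\,(t_{E,K_2} \cdot n_F).
\]
The estimate (\ref{tangenterrorest}) already supplies $\|t_{E,K_1} + t_{E,K_2}\|_{L^\infty(E)} \lesssim h$ (it is bounded by the sum of the two quantities estimated there via the triangle inequality and the normal approximation (\ref{geomassum2})), while $|t_{E,K_2}\cdot n_F| \le 1$. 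This yields the pointwise bound $|[t_E \cdot \nablash v]|^2 \lesssim h^2 |\nabla v_1|^2 + |[n_F \cdot \nabla v]|^2$.

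Next I would insert this into the edge sum. Since all jumps are constant on each edge, $h\|[t_E\cdot\nablash v]\|^2_E = h|E|\,|[t_E\cdot\nablash v]|^2$, and the two contributions are handled separately. For the normal-jump piece, the bounds $h|E| \lesssim h^2 \lesssim |F|$ (shape regularity) together with the injective correspondence $E \mapsto F$ give $\sum_{E} h|E|\,|[n_F\cdot\nabla v]|^2 \lesssim \sum_{F \in \mcF_h} \|[n_F\cdot\nabla v]\|^2_F = \tn v \tn^2_{\mcF_h}$. For the normal-mismatch piece, $h^3|E|\,|\nabla v_1|^2 \lesssim h\|\nabla v\|^2_{T_1}$ (again by $|E|\lesssim h$ and the shape regular lower bound on $|T_1|$), and summing with bounded overlap, since each tetrahedron carries a bounded number of edges, reduces matters to $\sum_{T\in\mcT_h} h\|\nabla v\|^2_T$. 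Exactly as in the proofs of Lemmas \ref{lem:trace} and \ref{lem:grad}, a standard inverse inequality followed by Lemma 4.4 of \cite{BuHaLa13} then gives $\sum_{T} h\|\nabla v\|^2_T \lesssim \sum_T h^{-1}\|v\|^2_T \lesssim \|v\|^2_{\Sigmah} + \tn v\tn^2_{\mcF_h}$, and combining the two pieces completes the estimate.

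The main obstacle, and the crux of the whole argument, is the gradient-splitting step: one must recognize that continuity of $v$ across the background face $F$ forces the full gradient jump there to be purely normal, so that the edge jump of the tangential surface gradient—which a priori has nothing to do with the face normal $n_F$—can be rewritten precisely in terms of the quantity $[n_F\cdot\nabla v]$ that is controlled by $J_{\mcF_h}$, leaving only the harmless $O(h)$ normal-mismatch term that is absorbed by $\|v\|^2_{\Sigmah}+\tn v\tn^2_{\mcF_h}$ through the inverse estimate. Everything else is routine scaling and bookkeeping of the edge-to-tetrahedron passage.
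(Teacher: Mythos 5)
Your proposal is correct and takes essentially the same route as the paper: both proofs split the edge jump $[t_E\cdot\nablash v]$ into a term proportional to the purely normal face jump $[n_F\cdot\nabla v]$ (using continuity of $v\in V_h$ across $F$) plus an $O(h)$ tangent-mismatch term controlled by (\ref{tangenterrorest}), then pass from the edge to the tetrahedra and conclude with an inverse inequality and Lemma 4.4 of \cite{BuHaLa13}. The only cosmetic differences are that you take $-t_{E,K_2}$ rather than the exact tangent $t^e$ as the reference direction (so the $O(h)$ factor $t_{E,K_1}+t_{E,K_2}$ comes from (\ref{tangenterrorest}) via the triangle inequality), and you replace the trace inequalities of Lemma \ref{lem:doubletrace} by direct measure scaling, which is legitimate here since all the relevant quantities are constant on $E$, $F$, and $T$.
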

\begin{proof} Consider the contribution to $J_{\mcE_h}(v,v)$ from
an edge $E \in \mcE_h$. We employ the same notation as in the proof
of Lemma \ref{lem:tanjumpsmooth}.
%
%Let $e_E$ be the unit vector parallel with
%the edge $E$ such that $t_{E,K_1} = n_{h,1} \times e_E$
%and $t_{E,K_2} = - n_{h,2} \times e_E$. Let $t=(n^e \times e_E)^l$ and
%$s = t \times n$. Then $t$ and $s$ span the tangent plane
%$T_{p(x)}(\Sigma)$ for $x \in E$ and
%\begin{align}
%&\| t_{E,K_1} - t^e \|_{L^\infty(E)}
%+ \| t^e + t_{E,K_2}\|_{L^\infty(E)}
%\nonumber
%\\ \label{tangenterrorest}
%&\qquad = \|(n_{h,1} - n^e) \times e_E \|_{L^\infty(E)}
%+ \|(n^e - n_{h,2}) \times e_E \|_{L^\infty(E)}
%\lesssim h
%\end{align}
Adding and subtracting $t^e$, using some basic estimates,
the trace inequality in Lemma \ref{lem:doubletrace}, the
estimate (\ref{tangenterrorest}) for the tangent error,
and finally an inverse estimate give
\begin{align}\nonumber
&h\| t_{E,K_1}\cdot \nablash v_1 + t_{E,K_2}\cdot \nablash v_2\|^2_E
\\
%&\qquad =h\| t_{E,K_1}\cdot \nabla v_1 + t_{E,K_2} \cdot \nabla v_2\|^2_E
%\\
&\qquad \lesssim h\|t^e \cdot [\nabla v]\|^2_E
+ h \| (t_{E,K_1}-t^e) \cdot \nabla v_1\|^2_E
+ h \|(t^e+ t_{E,K_2}) \cdot \nabla v_2\|^2_E
\\
&\qquad \lesssim h \|t^e\cdot n_{F,T_1} [n_F \cdot \nabla v]\|_E^2
+ h^3 \|\nabla v_1 \|^2_E + h^3 \|\nabla v_2\|^2_E
\\
&\qquad \lesssim \|[n_F \cdot \nabla v]\|_F^2
+ h \|\nabla v_1 \|^2_{T_1} + h \|\nabla v_2\|^2_{T_2}
\\
&\qquad \lesssim \|[n_F \cdot \nabla v]\|_F^2
+ h^{-1} \| v_1 \|^2_{T_1} + h^{-1} \| v_2\|^2_{T_2}
\end{align}
Here $F$ is the face in $\mcF_h$ with $E=F\cap \Sigma_h$
and $T_1,T_2$ are the elements in $\mcT_h$ that share
the face $F$.
Using this estimate
we get
\begin{equation}
\tn v \tn^2_{\mcE_h} \lesssim \tn v \tn^2_{\mcF_h}
+ \sum_{T \in \mcT_h} h^{-1} \| v \|^2_T
\lesssim \| v \|^2_{\Sigmah} + \tn v \tn^2_{\mcF_h}
\end{equation}
where we used Lemma 4.4 in \cite{BuHaLa13} in the last estimate.
\end{proof}

\subsection{Stability Estimate for the Discrete Mean Curvature Vector}

In this section our main result is a stability estimate for the discrete mean curvature vector.

\begin{lem}\label{lem:stab} If the surface approximation assumptions
(\ref{geomassum1}) and (\ref{geomassum2}) hold and the stabilization 
parameters satisfy $0\leq \tau_{\mcE_h}$ and $0< \tau_{\mcF_h}$ (in 
the cut case), then the discrete mean curvature vector $\mcv_h$ 
defined by (\ref{meancurvaturedisc}) satisfies the stability
estimate
\begin{equation}\label{stability}
\|\mcv_h \|^2_{\Sigma_h} 
+ \tau_{\mcE_h} \tn \mcv_h \tn^2_{\mcE_h}
+ \tau_{\mcF_h} \tn \mcv_h \tn^2_{\mcF_h} 
\lesssim 1
\end{equation}
\end{lem}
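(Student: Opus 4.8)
The plan is to test the discrete problem (\ref{meancurvaturedisc}) with $v=\mcv_h$. Then $B_h(\mcv_h,\mcv_h)=\|\mcv_h\|^2_{\Sigma_h}$ and $J_h(\mcv_h,\mcv_h)=\tau_{\mcE_h}\tn\mcv_h\tn^2_{\mcE_h}+\tau_{\mcF_h}\tn\mcv_h\tn^2_{\mcF_h}$, so the left-hand side is exactly the quantity we wish to bound; writing $X^2$ for it, the whole problem reduces to estimating the right-hand side $L_h(\mcv_h)=(\nablash x_{\Sigma_h},\nablash\mcv_h)_{\Sigma_h}$. The naive Cauchy--Schwarz bound $|L_h(\mcv_h)|\leq\|\nablash x_{\Sigma_h}\|_{\Sigma_h}\|\nablash\mcv_h\|_{\Sigma_h}$ followed by the inverse inequality of Lemma \ref{lem:grad} loses a factor $h^{-1}$ and only gives $X\lesssim h^{-1}$, so a sharper argument is required.

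The key step I would carry out is an elementwise integration by parts. Since each $K\in\mcK_h$ is flat, $\nablash x_{\Sigma_h}=P_{\Sigma_h}$ is piecewise constant and its tangential divergence vanishes, so the area contributions cancel and only conormal edge fluxes survive:
\begin{equation}
L_h(\mcv_h)=\sum_{K\in\mcK_h}\int_{\partial K}(P_{\Sigma_h}t_{E,K})\cdot\mcv_h\,ds=\sum_{E\in\mcE_h}\int_E(t_{E,K_1}+t_{E,K_2})\cdot\mcv_h\,ds,
\end{equation}
where I use that $P_{\Sigma_h}t_{E,K}=t_{E,K}$ (the conormal lies in the plane of $K$), that $\mcv_h\in W_h$ is single valued across each edge, and that $\Sigma$ is closed so every edge is interior. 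The point is that $t_{E,K_1}+t_{E,K_2}$ would vanish if the two triangles were coplanar; for a smooth surface its size is governed by the normal variation. Introducing the lifted tangent vector $t^e$ exactly as in the proof of Lemma \ref{lem:tanjumpsmooth} and applying the triangle inequality to the estimate (\ref{tangenterrorest}) yields $\|t_{E,K_1}+t_{E,K_2}\|_{L^\infty(E)}\lesssim h$.

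With this and $|E|\lesssim h$, I would estimate
\begin{equation}
|L_h(\mcv_h)|\lesssim\sum_{E\in\mcE_h}h\,\|\mcv_h\|_{L^1(E)}\lesssim\sum_{E\in\mcE_h}h^{3/2}\|\mcv_h\|_E\lesssim\Big(\sum_{E\in\mcE_h}h^2\Big)^{1/2}\Big(\sum_{E\in\mcE_h}h\|\mcv_h\|_E^2\Big)^{1/2}
\end{equation}
by Cauchy--Schwarz over the edges. Since $\mathrm{card}(\mcE_h)\lesssim h^{-2}$ the first factor is $\lesssim 1$, and the trace inequality of Lemma \ref{lem:trace} bounds the second by $(\|\mcv_h\|^2_{\Sigma_h}+\tn\mcv_h\tn^2_{\mcF_h})^{1/2}$, the face term present only in the cut case. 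Combining the two displays gives $X^2\lesssim(\|\mcv_h\|^2_{\Sigma_h}+\tn\mcv_h\tn^2_{\mcF_h})^{1/2}$. In the meshed case the right-hand side is $\lesssim X$ at once; in the cut case, since $\tau_{\mcF_h}>0$ one has $\tn\mcv_h\tn^2_{\mcF_h}\leq\tau_{\mcF_h}^{-1}X^2$ and $\|\mcv_h\|^2_{\Sigma_h}\leq X^2$, so again the right-hand side is $\lesssim X$. In either case $X^2\lesssim X$, whence $X\lesssim 1$ and (\ref{stability}) follows. The main obstacle is precisely the integration-by-parts step and the recognition that the residual edge flux is a tangent-vector jump of size $O(h)$: this is what converts the otherwise fatal $h^{-1}$ of the inverse inequality into the favorable $h$-weighting of the edge trace inequality in Lemma \ref{lem:trace}.
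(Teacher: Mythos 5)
Your proof is correct, and it takes a genuinely different route from the paper's. The paper never integrates $L_h(\mcv_h)$ by parts as a whole: it first adds and subtracts the extended exact embedding, writing $L_h(\mcv_h)=(\nablash(x_\Sigmah-x^e_\Sigma),\nablash\mcv_h)_\Sigmah+(\nablash x^e_\Sigma,\nablash\mcv_h)_\Sigmah=I+II$, handles the geometric term $I$ with Lemma \ref{lem:geomapprox} and the inverse inequality of Lemma \ref{lem:grad}, integrates only term $II$ by parts to produce edge jumps $[t_E\cdot\nablash x^e_\Sigma]$ estimated via Lemma \ref{lem:tanjumpsmooth}, and closes with Young's inequality and a kick-back using $\tau_{\mcF_h}>0$. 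You instead integrate by parts with the discrete embedding directly: since $\nablash x_\Sigmah=P_\Sigmah$ is constant on each flat element (triangle or planar quadrilateral in the cut case), the interior contributions vanish identically and $L_h(\mcv_h)=\sum_{E\in\mcE_h}\int_E(t_{E,K_1}+t_{E,K_2})\cdot\mcv_h\,ds$; note your edge flux is exactly $[t_E\cdot\nablash x_\Sigmah]$, so both arguments ultimately exploit the same mechanism---conormal jumps of size $O(h)$ by the tangent estimate (\ref{tangenterrorest}), absorbed through the $h$-weighted edge trace inequality of Lemma \ref{lem:trace}---but split the data differently. Your version is shorter and arguably cleaner: it bypasses Lemma \ref{lem:geomapprox} and Lemma \ref{lem:grad} entirely, and it makes the interior residual exactly zero, whereas the paper's integration by parts of term $II$ in principle leaves a term $-\sum_{K\in\mcK_h}(\Delta_K x^e_\Sigma,\mcv_h)_K$ that does not vanish (it is $O(1)\|\mcv_h\|_{L^1(\Sigmah)}$ and easily absorbed, but it is not displayed in the paper); your closing step $X^2\lesssim X$, with constant degrading like $\tau_{\mcF_h}^{-1}$ in the cut case just as in the paper, is a clean substitute for the $\delta$ kick-back. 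What the paper's decomposition buys in exchange is reuse: the splitting into a geometric perturbation plus an exact-surface term, and the lemmas it exercises here, reappear almost verbatim in the consistency analysis (terms $I$--$III$) of Theorem \ref{thm:errorest}, so the stability proof doubles as a rehearsal for the error estimate.
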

\begin{rem} We note that in the meshed case the edge stabilization term 
is not necessary to prove $L^2(\Sigma_h)$ stability of $\mcv_h$ but 
with $\tau_{\mcE_h}>0$ we get stability in a stronger norm. However, in 
the cut case $\tau_{\mcF_h}$ must be strictly positive to establish the stability 
estimate. 
\end{rem}
\begin{proof} Setting $v = \mcv_h$ in (\ref{meancurvaturedisc}) we
obtain
\begin{align}\nonumber
&\|\mcv_h \|^2_{\Sigma_h} 
+ \tau_{\mcE_h} \tn \mcv_h \tn^2_{\mcE_h}
+ \tau_{\mcF_h} \tn \mcv_h \tn^2_{\mcF_h} 
\\
&\qquad=B_h(\mcv_h,\mcv_h) + J_h(\mcv_h,\mcv_h)
\\
&\qquad=
L_h(\mcv_h)
\\
&\qquad=(\nablash x_\Sigmah, \nablash \mcv_h )_{\Sigmah}
\\
&\qquad=(\nablash (x_\Sigmah - x_\Sigma^e), \nablash \mcv_h )_{\Sigmah}
+(\nablash x_\Sigma^e, \nablash \mcv_h )_{\Sigmah}
\\
&\qquad=I + II
\end{align}
\noindent{\bf Term $\bfI$.} Using the geometry approximation Lemma \ref{lem:geomapprox} followed by the inverse inequality in 
Lemma \ref{lem:grad} we obtain
\begin{align}\nonumber
|I|&=|(\nablash (x_\Sigmah - x^e_\Sigma ), \nablash \mcv_h )_{\Sigmah}|
\\
&
\lesssim \| \nablash (x_\Sigmah - x^e_\Sigma ) \|_\Sigmah
\|\nablash \mcv_h \|_\Sigmah
\\
&
\lesssim \delta^{-1} h^{-2} \| \nablash (x_\Sigmah - x^e_\Sigma ) \|^2_\Sigmah
+ \delta  h^2 \|\nablash \mcv_h \|^2_\Sigmah
\\ \label{stab:lemmac}
&
\lesssim \delta^{-1}
%\| \nablash (x_\Sigmah - x^e_\Sigma ) \|_\Sigmah
+ \delta \Big( \| \mcv_h \|^2_\Sigmah + \tn \mcv_h \tn^2_{\mcF_h} \Big)
%\\
%&\qquad
%\lesssim \delta^{-1} h^{-2} \| \nablash (x_\Sigmah - x^e_\Sigma ) \|^2_\Sigmah
%        +   \delta  \| \mcv_h \|^2_\Sigmah
%\\  \label{stab:lemmac}
%&\qquad
%\lesssim \delta^{-1}
%        +   \delta  \| \mcv_h \|^2_\Sigmah
\end{align}
for any $\delta>0$.
%\todo{Fix the inverse inequality here. Does not work in the cut case.}

\noindent{\bf Term $\bfI\bfI$.} Element wise partial integration gives
\begin{align}
|II|&=|(\nablash x^e_\Sigma, \nablash \mcv_h )_{\Sigmah}|
\\
&= \left| \sum_{E \in \mcE_h} ([t_E \cdot \nablash x^e_\Sigma],\mcv_h)_E \right|
\\
&\leq  \sum_{E \in \mcE_h} \|[t_E \cdot \nablash x^e_\Sigma]\|_E
\|\mcv_h\|_E
\\ \label{stab:lemmaa}
&\lesssim \delta^{-1} \sum_{E \in \mcE_h} h^{-1} \|[t_E \cdot \nablash x^e_\Sigma]\|^2_E
+
\delta \sum_{E \in \mcE_h}   h \|\mcv_h \|^2_E
\\ \label{stab:lemmab}
& \lesssim \delta^{-1}
+ \delta \Big( \| \mcv_h \|^2_{\Sigma_h} + \tn \mcv_h\tn^2_{\mcF_h} \Big)
\end{align}
Here the first term on the right hand side of (\ref{stab:lemmaa})
was estimated using Lemma \ref{lem:tanjumpsmooth} as follows
\begin{equation}
\sum_{E \in \mcE_h} h^{-1} \|[t_E \cdot \nablash x^e_\Sigma]\|^2_E
\lesssim h^{-2} \tn x^e_\Sigma \tn^2_{\mcE_h} \lesssim 1
\end{equation}
and the second term was estimated using Lemma \ref{lem:trace}.

Combining the bounds (\ref{stab:lemmac}) and (\ref{stab:lemmab})
of $I$ and $II$ we obtain
\begin{align}
\| \mcv_h \|^2_{\Sigma_h} 
+ \tau_{\mcE_h} \tn \mcv_h \tn^2_{\mcE_h}
+ \tau_{\mcF_h} \tn \mcv_h \tn^2_{\mcF_h}
&\lesssim \delta^{-1}
+ \delta \Big( \| \mcv_h \|^2_{\Sigma_h} + \tn \mcv_h \tn^2_{\mcF_h} \Big)
\end{align}
The desired bound is finally obtained by, using the fact that $\tau_{\mcF_h}>0$, in the cut case and choosing $\delta$ small
enough followed by a kick back argument.
\end{proof}

\subsection{Interpolation}
The construction of the interpolation operator is different in the meshed
and cut cases but we use the same notation for the operator to get a unified
treatment.

\paragraph{Meshed Case:}
Let $\pi_h:C(\Sigma) \rightarrow V_h$ be defined by
\begin{equation}
\pi_h: v \mapsto \pi_{L,\mcK_h} v^e
\end{equation}
where $\pi_{L,\mcK_h}$ is the Lagrange interpolation operator
defined on $\Sigmah$. We have the elementwise error estimate
\begin{equation}\label{interpoltriang}
\| v^e - \pi_h v \|_{H^{m}(K)} \lesssim h^{k-m} \| v^e \|_{H^k(K)}
, \quad 0\leq m \leq k \leq 2, \quad \forall K\in \mcK_h
\end{equation}

\paragraph{Cut Level Set Surface Case:}
Let $\pi_h:C(\Sigma) \rightarrow V_h$ be defined by
\begin{equation}
\pi_h: v \mapsto (\pi_{L,\mcT_h} v^e)
\end{equation}
where $\pi_{L,\mcT_h}$ is the Lagrange interpolation
operator defined on the three dimensional mesh $\mcT_h$.
We have the elementwise error estimates
\begin{equation}\label{interpoltet}
\| v^e - \pi_h v \|_{H^{m}(T)} \lesssim h^{k-m} \| v^e \|_{H^k(T)}
, \quad 0\leq m \leq k \leq 2, \quad \forall T\in \mcT_h
\end{equation}
and
\begin{equation}\label{interpolface}
\| v^e - \pi_h v \|_{H^{m}(F)} \lesssim h^{k-m} \| v^e \|_{H^k(F)}
, \quad 0\leq m \leq k \leq 2, \quad \forall F \in \mcF_h
\end{equation}

For convenience we shall use the simplified notation 
$\pi_h u = \pi_h u^e \in V_h$ and $\pi_h^l u = ((\pi_h u^e)_{\Sigmah})^l$. In both cases we have the following interpolation error estimate
\begin{align}\label{interpol}
\| u - \pi_h^l u \|_{H^m(\Sigma)} &\lesssim h^{k-m} \| u \|_{H^k(\Sigma)}, \quad 0\leq m \leq k \leq 2
\end{align}
See \cite{BuHaLa13} and \cite{De09} for a proof of (\ref{interpol}).
We will also need the following interpolation error estimate for the 
terms emanating from the stabilization. 

\begin{lem} If the surface approximation
assumptions (\ref{geomassum1}) and (\ref{geomassum2}) hold, 
then the following interpolation error estimates hold
\begin{align}
 \label{interpoledgenorm}
\tn u^e - \pi_h u^e \tn_{\mcE_h} &\lesssim h \| u \|_{W^2_\infty(\Sigma)}
\\ \label{interpolfacenorm}
\tn u^e - \pi_h u^e \tn_{\mcF_h} &\lesssim h \| u \|_{W^2_\infty(\Sigma)}
\end{align}
\end{lem}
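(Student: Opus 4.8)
The plan is to reduce both stabilization seminorms of the interpolation error $w = u^e - \pi_h u^e$ to elementwise $L^\infty$ bounds on $\nabla w$ over \emph{shape-regular} elements, exploiting that a first-order Lagrange estimate already makes each elementwise gradient of $w$ of size $O(h)$. The only ingredient beyond what is displayed is the $W^1_\infty$ Lagrange interpolation estimate $\|\nabla(u^e-\pi_h u^e)\|_{L^\infty(K)}\lesssim h\|u^e\|_{W^2_\infty(K)}$ on the shape-regular triangles $K\in\mcK_h$ (meshed case) and $\|\nabla(u^e-\pi_h u^e)\|_{L^\infty(T)}\lesssim h\|u^e\|_{W^2_\infty(T)}$ on the shape-regular tetrahedra $T\in\mcT_h$ (cut case); these are the $L^\infty$ analogues of (\ref{interpoltriang})--(\ref{interpoltet}). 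Combining with the extension stability (\ref{eq:extstab}) for $s=2$, every such right-hand side is $\lesssim h\|u\|_{W^2_\infty(\Sigma)}$.

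For the edge seminorm (\ref{interpoledgenorm}) I would first discard the jump structure by the triangle inequality, $\|[t_E\cdot\nablash w]\|_E\le\|\nablash w_1\|_E+\|\nablash w_2\|_E$, and use $|\nablash w_i|\le|\nabla w_i|$ since $\nablash=P_{\Sigmah}\nabla$ is an orthogonal projection. Each edge $E$ lies in the closure of a shape-regular element of diameter $\lesssim h$ (a triangle $K_i$ in the meshed case, a tetrahedron $T_i$ in the cut case), so passing to the $L^\infty$ norm over that element and using $|E|\lesssim h$ gives $\|\nablash w_i\|_E^2\le |E|\,\|\nabla w_i\|_{L^\infty(T_i)}^2\lesssim h\cdot h^2\|u\|_{W^2_\infty(\Sigma)}^2$. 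Hence the per-edge contribution is $h\|[t_E\cdot\nablash w]\|_E^2\lesssim h^4\|u\|_{W^2_\infty(\Sigma)}^2$, and summing over the $\mathrm{card}(\mcE_h)\lesssim h^{-2}$ edges yields $\tn w\tn_{\mcE_h}^2\lesssim h^2\|u\|_{W^2_\infty(\Sigma)}^2$, which is (\ref{interpoledgenorm}).

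For the face seminorm (\ref{interpolfacenorm}) the argument is identical in spirit: $\|[n_F\cdot\nabla w]\|_F\le\|n_F\cdot\nabla w_1\|_F+\|n_F\cdot\nabla w_2\|_F\le\|\nabla w_1\|_F+\|\nabla w_2\|_F$, each face $F$ lies in the closure of a shape-regular tetrahedron $T_i$, and with $|F|\lesssim h^2$ one gets $\|\nabla w_i\|_F^2\le|F|\,\|\nabla w_i\|_{L^\infty(T_i)}^2\lesssim h^2\cdot h^2\|u\|_{W^2_\infty(\Sigma)}^2$. Summing the per-face bound $\|[n_F\cdot\nabla w]\|_F^2\lesssim h^4\|u\|_{W^2_\infty(\Sigma)}^2$ over the $\mathrm{card}(\mcF_h)\lesssim h^{-2}$ faces gives (\ref{interpolfacenorm}).

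The main point to get right, and the only place the cut case genuinely differs from the meshed case, is that the cut elements $K=\Sigmah\cap T$ may be arbitrarily ill-shaped, so one must never apply a trace or interpolation estimate on $K$ itself; all gradient bounds are instead taken in $L^\infty$ over the shape-regular background element $T\supset E,F$, whose interpolation constant is $h$-independent. A natural alternative for the face term is to use the stated $L^2$ estimate (\ref{interpoltet}) together with a single trace inequality from $F$ to $T$ and the vanishing of $\nabla\otimes\nabla\pi_h u^e$ on $T$; this route works cleanly for faces, but if pushed through for edges (requiring two traces, edge $\to$ face $\to$ tetrahedron) it produces a spurious third-derivative term and thus would demand $W^3_\infty$ regularity, so the uniform $L^\infty$ route is the economical choice here.
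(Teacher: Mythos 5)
Your proof is correct, but it takes a genuinely different route from the paper's. The paper works entirely with $L^2$-based tools: in the meshed case an elementwise trace inequality from $E$ to $K$ followed by the interpolation estimate (\ref{interpoltriang}); in the cut case the double trace inequality of Lemma \ref{lem:doubletrace} (edge $\to$ face $\to$ tetrahedron) together with (\ref{interpoltet}), and for the face seminorm a single trace from $F$ to $T$; the $W^2_\infty$ norm enters only at the very end, via $\|u^e\|^2_{H^2(T)} \lesssim h^3 \|u^e\|^2_{W^2_\infty(U_{\delta_0}(\Sigma))}$ (respectively $h^2$ on triangles and faces) combined with the counting bounds $\mathrm{card}(\mcK_h),\mathrm{card}(\mcT_h),\mathrm{card}(\mcF_h)\lesssim h^{-2}$ and the extension stability (\ref{eq:extstab}). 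You bypass all trace inequalities: you discard the jump by the triangle inequality, bound each one-sided gradient pointwise via the standard $W^1_\infty$ Lagrange estimate on the shape-regular background element, and then use only $|E|\lesssim h$, $|F|\lesssim h^2$, and the same cardinality counts. Since the target norm is $\|u\|_{W^2_\infty(\Sigma)}$ anyway, nothing is lost by going pointwise, and you correctly identify and avoid the one genuine trap, namely doing interpolation or trace estimates on the possibly degenerate cut elements $K=\Sigmah\cap T$, by always working on the shape-regular $T$. What your route buys is economy: no Lemma \ref{lem:doubletrace} and no need to exploit the vanishing of second derivatives of $\pi_h u^e$. What it costs is the import of $L^\infty$ interpolation estimates that are standard but not among (\ref{interpoltriang})--(\ref{interpolface}); note also that in the meshed case $\pi_h u^e$ lives only on $\Sigmah$, so ``$\nabla w$'' on a triangle $K$ should be read as the in-plane gradient of the restriction, with $\|u^e|_K\|_{W^2_\infty(K)}$ controlled by (\ref{eq:extstab}) --- your phrasing $|\nablash w|\le|\nabla w|$ is literal only in the cut case. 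One small correction to your closing remark: the paper's trace-based route for the cut edge term does \emph{not} require $W^3_\infty$ regularity. After tracing from $E$ to $F$, the second-order term is $h^2\|\nabla_F(\nablash(u^e-\pi_h u^e))\|^2_F$; the interpolant drops out because $\nablash \pi_h u^e$ is constant on each tetrahedron, and the paper bounds the remaining term $h^2\|\nabla\otimes\nabla u^e\|^2_F$ directly on the face in $L^\infty$ (giving $h^4$ per face) without ever tracing the Hessian down to $T$, so no third derivatives arise.
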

\begin{proof}
\noindent \emph{Estimate (\ref{interpoledgenorm}).} In the meshed case applying a standard trace inequality elementwise followed by the interpolation estimate (\ref{interpoltriang}) yields
\begin{align}
&\sum_{E\in\mcE_h} h \| t_E \cdot \nablash (u^e - \pi_h u^e)\|^2_E
\nonumber
\\
&\qquad \lesssim
\sum_{K\in\mcK_h} \|\nablash (u^e - \pi_h u^e)\|^2_K
+ h^2 \|\nablash\otimes \nablash (u^e - \pi_h u^e)\|^2_K
\\
&\qquad \lesssim \sum_{K\in\mcK_h} h^2 \|u^e\|^2_{H^2(K)}
\\
&\qquad \lesssim \left( \sum_{K\in\mcK_h} h^4\right) \|u^e\|^2_{W^2_\infty(U_{\delta_0}(\Sigma))}
\\
&\qquad \lesssim h^2 \|u \|^2_{W^2_\infty(\Sigma)}
\end{align}
where we used the fact that $\text{card}(\mcK_h)\lesssim h^{-2}$
and the stability (\ref{eq:extstab}) of the extension $u^e$.

In the cut case, we first apply the trace inequality
(\ref{cuttrace}) to pass from the edge $E$ to the face
$F=F(E)$ such that $F\cap\Sigmah = E$. Next we note
that second order derivatives of $\pi_h u^e$ vanish,
then we use a trace inequality to pass from the faces to
the tetrahedra and use the interpolation estimate
(\ref{interpoltet}) as follows
\begin{align}
&\sum_{E \in \mcE_h} h \| t_E \cdot \nablash (u^e - \pi_h u^e)\|^2_E
\nonumber
\\
&\qquad \lesssim  \sum_{F \in \mcF_h} \|\nablash (u^e - \pi_h u^e) \|^2_F
+ h^2 \|\nabla_F (\nablash (u^e - \pi_h u^e))\|^2_F
\\
&\qquad \lesssim  \sum_{T \in \mcT_h}
h^{-1}\|\nablash (u^e - \pi_h u^e) \|^2_T + h \| \nabla(\nablash (u^e-\pi_h u^e)\|^2_T
\\ \nonumber
&\qquad\qquad + \sum_{F \in \mcF_h} h^2 \|\nabla(\nabla(u^e))\|^2_F
\\
&\qquad  \lesssim \sum_{T \in \mcT_h} h \|u^e\|_{H^2(T)}^2 +
\sum_{F \in \mcF_h} h^2 \|\nabla(\nabla(u^e))\|^2_F
\\
&\qquad  \lesssim \left(\sum_{T \in \mcT_h} h^4 +
\sum_{F \in \mcF_h} h^4 \right)
\| u^e \|^2_{W^2_\infty(U_{\delta_0}(\Sigma))}
\\ \label{interpolaa}
&\qquad  \lesssim h^2 \|u \|^2_{W^2_\infty(\Sigma)}
\end{align}
Here we used the fact that
$\text{card}(\mcF_h)\sim \text{card}(\mcT_h)\lesssim h^{-2}$
and the stability (\ref{eq:extstab}) of the extension $u^e$.
% the neighborhood $U_\delta(\Sigma)$ is chosen such that
%$\cup_{T\in \mcT_h} T \subset U_\delta(\Sigma)$ and thus we
%may take $\delta \sim h$, which leads to the estimate
%\begin{equation}\label{tubnbhdest}
% \| u^e \|^2_{H^2(U_{\delta}(\Sigma))}
% \lesssim  h \| u^e \|^2_{W^2_\infty(U_{\delta}(\Sigma))}
% \lesssim h \|u \|^2_{W^2_\infty(\Sigma)}
%\end{equation}

\noindent \emph{Estimate (\ref{interpolfacenorm}).} Using a
standard trace inequality followed by the interpolation estimate
(\ref{interpoltet}) we obtain
\begin{align}
\sum_{F \in \mcF_h} \|[ n_F \cdot \nabla (u^e - \pi_h u^e)]\|^2_F
&\lesssim \sum_{T \in \mcT_h} h^{-1} \|\nabla (u^e - \pi_h u^e )\|_T^2
+ h \|\nabla (\nabla (u^e - \pi_h u^e)) \|_T^2
\\
&\lesssim \sum_{T \in \mcT_h} h \|u^e \|_{H^2(T)}^2
\\
&\lesssim \left(\sum_{T \in \mcT_h} h^4\right) \|u^e \|_{W^2_\infty(U_{\delta_0}(\Sigma))}
\\
&\lesssim h^2 \| u \|^2_{W^2_\infty(\Sigma)}
\end{align}
where again we used the fact that $\text{card}(\mcT_h)\lesssim h^{-2}$
and the stability (\ref{eq:extstab}) of the extension $u^e$.
\end{proof}

\subsection{Error Estimate for the Discrete Mean Curvature Vector}

We are now ready to state and prove our main result.
\begin{thm}\label{thm:errorest} Let $\Sigma$ be a smooth surface,
$\Sigma_h$ an approximate
surface that is either meshed or cut and satisfies (\ref{geomassum1})
and (\ref{geomassum2}), then the discrete mean curvature vector $\mcv_h$,
defined by (\ref{meancurvaturedisc}), with parameters $\tau_{\mcE_h}>0$ 
and $\tau_{\mcF_h}>0$ (in the cut case), satisfies the estimate
\begin{equation}\label{l2bound}
\|\mcv - \mcv_h^l\|^2_\Sigma + \tau_{\mcE_h} \tn \mcv_h \tn^2_{\mcE_h}
+ \tau_{\mcF_h} \tn \mcv_h \tn^2_{\mcF_h} \lesssim h^2
\end{equation}
\end{thm}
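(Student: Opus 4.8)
The plan is a Strang-type argument: coercivity of the stabilized form turns the error into a consistency residual, which I split into a stiffness/geometry part, a mass part, and a stabilization part, with only the first being delicate. Set $\xi = \mcv_h - \pi_h\mcv \in W_h$ and write $\tn v\tn_h^2 = \|v\|^2_{\Sigmah} + \tau_{\mcE_h}\tn v\tn^2_{\mcE_h} + \tau_{\mcF_h}\tn v\tn^2_{\mcF_h}$ for the stabilized energy norm of (\ref{stability}). By the triangle inequality, the interpolation bound (\ref{interpol}) with $m=0$, $k=2$, and the norm equivalence (\ref{eq:normequ}),
\begin{equation*}
\|\mcv - \mcv_h^l\|_\Sigma \le \|\mcv - \pi_h^l\mcv\|_\Sigma + \|\xi^l\|_\Sigma \lesssim h^2 + \|\xi\|_{\Sigmah} \le h^2 + \tn\xi\tn_h,
\end{equation*}
and the two stabilization seminorms in (\ref{l2bound}) are likewise dominated by those of $\xi$ plus those of $\pi_h\mcv$; so the theorem reduces to proving $\tn\xi\tn_h \lesssim h$. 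Since the stabilized form $a_h(u,v) = B_h(u,v) + J_h(u,v)$ is coercive, $a_h(\xi,\xi) = \tn\xi\tn_h^2$, the discrete equation (\ref{meancurvaturedisc}) gives $\tn\xi\tn_h^2 = L_h(\xi) - a_h(\pi_h\mcv,\xi)$. Testing (\ref{meancurvature}) with the lift $\xi^l \in W$ yields $(\nablas x_\Sigma, \nablas\xi^l)_\Sigma = (\mcv,\xi^l)_\Sigma$, and inserting this produces
\begin{equation*}
\tn\xi\tn_h^2 = \underbrace{\big(L_h(\xi) - (\nablas x_\Sigma,\nablas\xi^l)_\Sigma\big)}_{R_A} + \underbrace{\big((\mcv,\xi^l)_\Sigma - (\pi_h\mcv,\xi)_{\Sigmah}\big)}_{R_B} \underbrace{-\, J_h(\pi_h\mcv,\xi)}_{R_C}.
\end{equation*}

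The parts $R_B$ and $R_C$ are routine. For $R_B$ I would transfer $(\mcv,\xi^l)_\Sigma$ to $\Sigmah$ via $d\Sigma = |B|\,d\Sigmah$ and insert $\mcv^e$; the measure bound $\|1-|B|\|_{L^\infty}\lesssim h^2$ from (\ref{detBbound}) and the $L^2$ interpolation estimate $\|\mcv^e - \pi_h\mcv\|_{\Sigmah}\lesssim h^2$ give $|R_B| \lesssim h^2\|\xi\|_{\Sigmah} \le h^2\tn\xi\tn_h$. For $R_C$ I would apply Cauchy--Schwarz in each jump form together with $\tn\pi_h\mcv\tn_{\mcE_h}\lesssim h$ and $\tn\pi_h\mcv\tn_{\mcF_h}\lesssim h$: the first bound follows from Lemma \ref{lem:tanjumpsmooth} (so that $\tn\mcv^e\tn_{\mcE_h}\lesssim h$) and the interpolation estimate (\ref{interpoledgenorm}), the second from (\ref{interpolfacenorm}) after noting that the globally $C^1$ field $\mcv^e$ has vanishing face jumps. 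Using $\tau_\bullet\tn\xi\tn_\bullet \le \sqrt{\tau_\bullet}\,\tn\xi\tn_h$ this yields $|R_C| \lesssim h\,\tn\xi\tn_h$.

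The main obstacle is the geometric consistency term $R_A = (\nablash x_{\Sigmah},\nablash\xi)_{\Sigmah} - (\nablas x_\Sigma,\nablas\xi^l)_\Sigma$, because the crude Cauchy--Schwarz used for the analogous term in the stability Lemma \ref{lem:stab} only gives $O(1)\tn\xi\tn_h$, which is one power of $h$ too weak. I would split off the extension, $R_A = (\nablash(x_{\Sigmah}-x_\Sigma^e),\nablash\xi)_{\Sigmah} + \big[(\nablash x_\Sigma^e,\nablash\xi)_{\Sigmah} - (\nablas x_\Sigma,\nablas\xi^l)_\Sigma\big]$, and the key is that \emph{each} piece carries a hidden extra factor of $h$ once one uses that $\nablash\xi$ is tangential to $\Sigmah$, i.e. $\nablash\xi\,n_h = 0$. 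For the first piece, the identities of Lemma \ref{lem:geomapprox} give $\nablash(x_{\Sigmah}-x_\Sigma^e) = (P_\Sigmah n^e)\otimes n^e + \rho\,P_\Sigmah\mcH$; contracting with $\nablash\xi$ and writing $n^e\cdot\nablash\xi = (n^e - n_h)\cdot\nablash\xi$ shows the integrand is $O(h^2)\,|\nablash\xi|$ (using $\|P_\Sigmah n^e\|,\|n^e-n_h\|\lesssim h$ and $\|\rho\|\lesssim h^2$), not the naive $O(h)$. For the second piece, using $\nablash x_\Sigma^e = B^T\nablas x_\Sigma$, $\nablash\xi = B^T\nablas\xi^l$ and $d\Sigma=|B|\,d\Sigmah$, it reduces to $\int_{\Sigmah} P_\Sigma:\big((BB^T-|B|I)\nablas\xi^l\big)$, where the left projection $P_\Sigma$ annihilates the normal directions, so the effective coefficient $P_\Sigma(BB^T-|B|I)$ is $O(h^2)$ by the bounds $\|P_\Sigma-BB^T\|_{L^\infty}\lesssim h^2$ and $\|1-|B|\|_{L^\infty}\lesssim h^2$ of (\ref{BBTbound})--(\ref{detBbound}). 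In both pieces the integrand is thus $\lesssim h^2|\nablash\xi|$, and the inverse estimate Lemma \ref{lem:grad}, $\|\nablash\xi\|_{\Sigmah}\lesssim h^{-1}\tn\xi\tn_h$, gives $|R_A|\lesssim h^2\|\nablash\xi\|_{\Sigmah}\lesssim h\,\tn\xi\tn_h$. This is exactly where the face stabilization is indispensable in the cut case: Lemma \ref{lem:grad} (and Lemma \ref{lem:trace}) hold there only with the $\tn\xi\tn_{\mcF_h}$ term present, which is controlled because $\tau_{\mcF_h}>0$.

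Collecting the three estimates gives $\tn\xi\tn_h^2 \lesssim h\,\tn\xi\tn_h$, hence $\tn\xi\tn_h \lesssim h$ after dividing, and the theorem follows from the reduction in the first paragraph, the final stabilization seminorms being bounded by $\tn\xi\tn_h + \sqrt{\tau_\bullet}\,h \lesssim h$. I expect the delicate point to be precisely the recovery of the extra power of $h$ in $R_A$ through the tangentiality $\nablash\xi\,n_h=0$ and the normal-direction structure of the two geometric defects $(P_\Sigmah n^e)\otimes n^e$ and $BB^T-|B|I$; the crude Cauchy--Schwarz that suffices for stability fails here, and the estimate also relies on the face stabilization to control $\|\nablash\xi\|_{\Sigmah}$ in the cut setting.
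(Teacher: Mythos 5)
Your Strang-type skeleton (coercivity plus the split into $R_A$, $R_B$, $R_C$) is a legitimate reorganization of the paper's Galerkin-orthogonality argument, and your treatment of $R_B$, $R_C$, and of the second (projection/measure) piece of $R_A$ matches the paper's Terms $IV$, $V$ and $II$/$III_1$. But there is a genuine gap exactly at the step you flag as the key idea: the claimed pointwise $O(h^2)$ bound for $(\nablash(x_\Sigmah-x_\Sigma^e),\nablash\xi)_\Sigmah$ rests on a transposed tensor identity. With the paper's convention $\nablash\xi=\xi\otimes\nablash$ (component index first, derivative index second), every such gradient satisfies $(\nablash\xi)P_\Sigmah=\nablash\xi$, i.e.\ the \emph{derivative} slot is the tangential one, and the correct defect tensor is
\begin{equation*}
\nablash(x_\Sigmah-x_\Sigma^e)\;=\;n^e\otimes(P_\Sigmah n^e)\;+\;\rho\,\mcH P_\Sigmah ,
\end{equation*}
not $(P_\Sigmah n^e)\otimes n^e+\rho\,P_\Sigmah\mcH$ (your version fails the structural check $A P_\Sigmah=A$; the paper's display in Lemma \ref{lem:geomapprox} is transpose-insensitive because only norms are used there, which is how you were misled). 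In the Frobenius pairing the rank-one part is therefore $\sum_i (n^e)_i\,(P_\Sigmah n^e)\cdot\nablash\xi_i$: the $O(h)$ factor $P_\Sigmah n^e$ hits the derivative slot, which is already tangential, so $(\nablash\xi)\,n_h=0$ buys nothing further, while the $O(1)$ factor $n^e$ hits the component slot, where $\sum_i (n_h)_i\nablash\xi_i=\nablash(n_h\cdot\xi)\neq 0$ in general. The integrand is thus genuinely $O(h)|\nablash\xi|$, and after Lemma \ref{lem:grad} you recover only $O(1)\tn\xi\tn_h$ --- precisely the crude bound you correctly identified as insufficient. Integrating the rank-one part by parts does not rescue a pointwise gain either, since $\nablash\cdot(P_\Sigmah n^e)=\operatorname{tr}(P_\Sigmah\mcH)+\dots=O(1)$.

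There is an independent reason the argument cannot be right as written: in the meshed case your proof nowhere needs $\tau_{\mcE_h}>0$ (Lemmas \ref{lem:trace} and \ref{lem:grad} hold there without stabilization, and your $R_C$ bound works for any $\tau_{\mcE_h}\geq 0$), so it would prove first-order $L^2$ convergence of the \emph{unstabilized} discrete mean curvature vector --- contradicting the known non-convergence recalled in the introduction and the paper's flipped-diagonal experiments. The paper handles exactly this term (its $III_2$) by elementwise partial integration: moving the derivative off the discrete factor trades the $O(h)$ gradient bound for the $O(h^2)$ position bound $\|x_\Sigma^e-x_\Sigmah\|_{L^\infty(\Sigmah)}\lesssim h^2$ of Lemma \ref{lem:geomapprox}, at the price of element terms with $\Delta_K\mcv^e$ (bounded, while $\Delta_K$ of the discrete part vanishes) and edge terms carrying the jumps $[t_E\cdot\nablash(\cdot)]$, which are absorbed into the edge stabilization via Lemma \ref{lem:tanjumpsmooth} and a kick-back --- this is exactly why the theorem assumes $\tau_{\mcE_h}>0$, with Theorem \ref{thm:jump} needed afterwards to drop it in the cut case. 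The repair within your framework is the same move: since $\Delta_K\xi=0$, write $(\nablash(x_\Sigma^e-x_\Sigmah),\nablash\xi)_\Sigmah=\sum_{E\in\mcE_h}(x_\Sigma^e-x_\Sigmah,[t_E\cdot\nablash\xi])_E\lesssim \big(\sum_E h^{-1}\|x_\Sigma^e-x_\Sigmah\|_E^2\big)^{1/2}\tn\xi\tn_{\mcE_h}\lesssim h\,\tau_{\mcE_h}^{-1/2}\tn\xi\tn_h$, which restores the missing power of $h$ but makes essential use of the edge stabilization you had bypassed.
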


\begin{proof} We first note that we have the following Galerkin
orthogonality property
\begin{align}
B(\mcv - \mcv_h^l, v^l) &= L(v^l) - B(\mcv_h^l, v^l)
\\
&=L(v^l) - L_h(v) + B_h(\mcv_h,v) - B(\mcv_h^l, v^l) + J_h(\mcv_h,v)
\end{align}
for all $v \in W_h$. Using this identity we obtain
\begin{align}
\nonumber
&B(\mcv - \mcv_h^l, \mcv - \mcv_h^l) + J_h(\mcv_h,\mcv_h)
\\
&\quad= B(\mcv - \mcv_h^l, \mcv - w^l)
+ B(\mcv - \mcv_h^l, w^l - \mcv_h^l) + J_h(\mcv_h,\mcv_h)
\\
&\quad=B(\mcv - \mcv_h^l, \mcv - w^l)
\\ \nonumber
& \qquad + L(w^l - \mcv_h^l) - L_h(w - \mcv_h)
\\ \nonumber
& \qquad
+ B_h(\mcv_h,w - \mcv_h)
- B(\mcv_h^l,w^l - \mcv_h^l)
\\  \nonumber
& \qquad
+ J_h(\mcv_h,w - \mcv_h ) + J_h(\mcv_h,\mcv_h)
\\
&=B(\mcv - \mcv_h^l, \mcv - w^l)
\\ \nonumber
& \qquad + \Big( L(w^l - \mcv) - L_h(w - \mcv^e) \Big)
+ \Big( L(\mcv - \mcv_h^l) - L_h(\mcv^e - \mcv_h)\Big)
\\ \nonumber
& \qquad
+ \Big( B_h(\mcv_h,w - \mcv_h)
- B(\mcv_h^l,w^l - \mcv_h^l) \Big)
+ J_h(\mcv_h,w)
\\
&=I + II + III + IV + V
\end{align}
for all $w \in W_h$. We choose $w = \pi_h \mcv$ and proceed
with estimates of terms $I-V$.

\paragraph{Term $\bfI$.} Using Cauchy-Schwarz followed by the
interpolation error estimate (\ref{interpol}), with $k=1$ and
$m=0$, we obtain
\begin{align}
|I| &= |B(\mcv - \mcv_h^l, \mcv - w^l)|
\\
&\leq \| \mcv - \mcv_h^l\|_{\Sigma} \|\mcv - w^l\|_{\Sigma}
\\
& \lesssim \delta^{-1} \|\mcv - w^l\|^2_{\Sigma}
+ \delta \| \mcv - \mcv_h^l\|^2_{\Sigma}
\\ \label{TermI}
&\lesssim \delta^{-1} h^2 + \delta \| \mcv - \mcv_h^l\|^2_{\Sigma}
\end{align}
for any $\delta>0$.

\paragraph{Term $\bfI\bfI$.} Changing domain of integration from
$\Sigma_h$ to $\Sigma$ and using Cauchy-Schwarz we obtain
\begin{align}
II &=L(w^l - \mcv) - L_h(w - \mcv^e)
\\
&= (\nablas x_\Sigma, \nablas (w^l - \mcv) )_\Sigma - (\nablash x_{\Sigma_h}, \nablash (w - \mcv^e) )_{\Sigma_h}
\\
%&= (\nablas x_\Sigma, \nablas (w^l - \mcv) )_\Sigma - (|B|^{-1}(\nablash %x_{\Sigma_h})^l, (\nablash (w - \mcv^e))^l )_{\Sigma}
%\\
&= (\nablas x_\Sigma, \nablas (w^l - \mcv) )_\Sigma
    - (|B|^{-1} B^T \nablas x_{\Sigma_h}^l, B^T \nablas (w^l - \mcv) )_{\Sigma}
\\
&= (\nablas x_\Sigma - |B|^{-1} B B^T \nablas x_{\Sigma_h}^l, \nablas (w^l - \mcv))_\Sigma
\\ \label{eq:termIIa}
&\leq \| \nablas x_\Sigma - |B|^{-1} B B^T \nablas x_{\Sigma_h}^l \|_\Sigma
\| \nablas (w^l - \mcv) \|_\Sigma
\\
&=II_1 \times II_2
\end{align}
\noindent{\emph{Term} $II_1$.} Adding and subtracting $x_\Sigmah^l$, using
the triangle inequality, and the equivalence of norms
(\ref{eq:normequgrad}), we obtain
\begin{align}\nonumber
II_1&=\| \nablas x_\Sigma - |B|^{-1} B B^T \nablas x_{\Sigma_h}^l \|_\Sigma
\\
&\leq \| \nablas (x_\Sigma - x_{\Sigma_h}^l)\|_\Sigma +
\|(P_\Sigma - |B|^{-1} B B^T)\nablas x_{\Sigma_h}^l \|_\Sigma
\\
&\lesssim \| \nablash (x_\Sigma^e - x_{\Sigma_h})\|_\Sigmah +
\|(P_\Sigma - |B|^{-1} B B^T)\|_{L^\infty(\Sigma)}
\|\nablas x_{\Sigma_h}^l \|_\Sigma
\\
&\lesssim h
\end{align}
Here we used the estimate
\begin{align}\nonumber
&\| P_\Sigma - |B|^{-1} B B^T \|_{L^\infty(\Sigma)}
=
\| |B|^{-1} (|B| P_\Sigma - B B^T )\|_{L^\infty(\Sigma)}
\\ \label{BBTtotbound}
&\qquad \leq \| |B|^{-1} (1 - |B|) \|_{L^\infty(\Sigma)}
+ \| |B|^{-1}\|_{L^\infty(\Sigma)} \| (P_\Sigma - B B^T )\|_{L^\infty(\Sigma)}
\lesssim h^2
\end{align}
which follows from the bounds (\ref{BBTbound}) and (\ref{detBbound})
for $B$ and its determinant. We also used the estimate
\begin{equation}
\|\nablas x_{\Sigma_h}^l \|_\Sigma \lesssim
\|\nablash (x_{\Sigma_h} - x_\Sigma^e) \|_\Sigmah + \|\nablas x_{\Sigma} \|_\Sigma
\lesssim h + 1 \lesssim 1
\end{equation}
where we used (\ref{eq:normequgrad}) and the first term was 
estimated using Lemma \ref{lem:geomapprox}.

\noindent{\emph{Term} $II_2$.} Using the interpolation error
estimate (\ref{interpol}), with $w^l = \pi_h^l \mcv$, we obtain
\begin{equation}
II_2 \lesssim h
\end{equation}
Combining the estimates of $II_1$ and $II_2$ we conclude that
\begin{equation}\label{TermII}
II \lesssim h^2
\end{equation}

\paragraph{Term $\bfI\bfI\bfI$.} Adding and subtracting a suitable
term yields
\begin{align}
III&=L(\mcv - \mcv_h^l) - L_h(\mcv^e - \mcv_h)
\\
&= \Big((\nablas x_\Sigma, \nablas (\mcv - \mcv_h^l) )_\Sigma
-(\nablash x_\Sigma^e, \nablash (\mcv^e - \mcv_h) )_{\Sigma_h}\Big)
\\ \nonumber
&\qquad + (\nablash(x_\Sigma^e - x_{\Sigma_h}),
\nablash (\mcv^e - \mcv_h) )_{\Sigma_h}
\\
&=III_1+III_2
\end{align}
We proceed with estimates of the terms $III_1$ and $III_2$.

\noindent \emph{Term $III_1$.} Changing domain of integration from $\Sigma_h$ to $\Sigma$ in the second term and using the bound
(\ref{BBTtotbound}) we get
\begin{align}
III_1&=(\nablas x_\Sigma, \nablas (\mcv - \mcv_h^l) )_\Sigma
-(|B|^{-1}(\nablash x_\Sigma^e)^l, (\nablash (\mcv^e - \mcv_h))^l )_{\Sigma}
\\
&=((P_\Sigma - |B|^{-1}BB^T)\nablas x_\Sigma, \nablas (\mcv - \mcv_h^l) )_\Sigma
\\
&\lesssim \| P_\Sigma -  |B|^{-1} B B^T \|_{L^\infty(\Sigma)}
\| \nablas x_\Sigma \|_\Sigma \|\nablas (\mcv - \mcv_h^l ) \|_\Sigma
\\
&\lesssim h^2 \|\nablas (\mcv - \mcv_h^l ) \|_\Sigma
\end{align}
Next continuing with
the estimate, we add and subtract an interpolant and use the
interpolation error estimate (\ref{interpol}) and the inverse
inequality in Lemma \ref{lem:grad} as follows
\begin{align}
&h^2 \|\nablas (\mcv - \mcv_h^l ) \|_\Sigma
\nonumber
\\
&\qquad \lesssim h^2 \|\nablas (\mcv - \pi_h^l \mcv ) \|_\Sigma
+
h^2 \|\nablas (\pi_h^l \mcv- \mcv_h^l ) \|_\Sigma
\\
&\qquad\lesssim h^3 + \delta^{-1} h^2
+  \delta h^2 \| \nablas(\pi_h^l \mcv - \mcv_h^l) \|^2_\Sigma
\\
&\qquad \lesssim h^3 + \delta^{-1} h^2
+ \delta  \Big( \|\pi_h^l \mcv - \mcv_h^l \|^2_\Sigma
+ \tn \pi_h \mcv^e -\mcv_h \tn_{\mcF_h}^2 \Big)
\\
&\qquad \lesssim h^3 + \delta^{-1} h^2 
\\ \nonumber
&\qquad \qquad + \delta
\Big(\|\mcv - \mcv_h^l\|^2_\Sigma + \|\mcv - \pi_h^l \mcv\|^2_\Sigma
+\tn \pi_h \mcv^e -\mcv^e \tn_{\mcF_h}^2 + \tn \mcv_h \tn_{\mcF_h}^2 \Big)
\\ \label{TermIII1}
&\qquad\lesssim h^3 + \delta^{-1} h^2 + \delta  h^2
+ \delta \Big( \|\mcv - \mcv_h^l \|^2_\Sigma
+  \tn \mcv_h \tn_{\mcF_h}^2  \Big)
\end{align}
where we used the interpolation error estimates (\ref{interpol}) 
and (\ref{interpolfacenorm}).

\noindent \emph{Term $III_2$.} Element wise partial integration gives
\begin{align}
III_2
&= \sum_{K \in \mcK_h} (\nablash (x_\Sigma^e - x_{\Sigma_h}),\nablash (\mcv^e - \mcv_h) )_{K}
\\
&=-\sum_{K \in \mcK_h} (x_\Sigma^e - x_{\Sigma_h},\Delta_{K} (\mcv^e - \mcv_h) )_{K}
\\ \nonumber
&\qquad + \sum_{E \in \mcE_h} (x_\Sigma^e - x_{\Sigma_h}, [t_{E} \cdot \nablash (\mcv^e - \mcv_h)] )_{E}
\\ \label{III2a}
&\lesssim \sum_{K \in \mcK_h} \|x_\Sigma^e - x_{\Sigma_h}\|_K
\| \Delta_{K} \mcv^e \|_{K}
\\ \nonumber
&\qquad + \delta^{-1}\left( \sum_{E \in \mcE_h} h^{-1} \| x_\Sigma^e - x_{\Sigma_h}\|^2_E \right)
+ \delta \tn \mcv^e - \mcv_h \tn_{\mcE_h}^2
% + \delta \left( \sum_{E \in \mcE_h} h \| [t_{E} \cdot \nablash (\mcv^e - %\mcv_h)]\|^2_E \right)
\end{align}
where $\Delta_K v = (\nablash \cdot \nablash v)|_K$ is the tangent
Laplacian on the flat element $K\in \mcK_h$ and therefore $\Delta_K H_h = 0$
since $H_h$ is linear on $K$. The first term on the right hand
side of (\ref{III2a}) is estimated using Lemma \ref{lem:geomapprox} as follows
\begin{equation}
 \sum_{K \in \mcK_h} \|x_\Sigma^e - x_{\Sigma_h}\|_K
\| \Delta_{K} \mcv^e \|_{K}
\lesssim \|x_\Sigma^e - x_{\Sigma_h}\|_\Sigmah
\|\mcv^e \|_{W^2_\infty(U_{\delta_0}(\Sigma))}
\lesssim h^2 \| H \|_{W^2_\infty(\Sigma)}
\lesssim h^2
\end{equation}
The second term is estimated
using Lemma \ref{lem:geomapprox} as follows
\begin{equation}
\sum_{E \in \mcE_h} h^{-1} \| x_\Sigma^e - x_{\Sigma_h} \|^2_E
\lesssim
\sum_{E \in \mcE_h} \| x_\Sigma^e - x_{\Sigma_h} \|^2_{L^\infty(E)}
\lesssim
\sum_{E \in \mcE_h} h^4 \lesssim h^2
\end{equation}
since card$(\mcE_h) \lesssim h^{-2}$ in both the meshed and cut case. Finally, the third term is estimated using Lemma \ref{lem:tanjumpsmooth}
as follows
\begin{equation}
\tn \mcv^e - \mcv_h \tn_{\mcE_h}^2
\lesssim 
\tn \mcv^e \tn_{\mcE_h}^2
+
\tn \mcv_h \tn_{\mcE_h}^2
\lesssim h^2 + \tn \mcv_h \tn^2_{\mcE_h}
\end{equation}
Thus we arrive at the bound
\begin{equation}\label{TermIII2}
III_2 \lesssim h^2 + \delta^{-1} h^2 + \delta h^2 +
\delta \tn \mcv_h \tn^2_{\mcE_h}
\end{equation}

Combining the estimates (\ref{TermIII1}) and (\ref{TermIII2}) of Terms
$III_1$ and $III_2$ we obtain
\begin{equation}\label{TermIII}
III \lesssim h^2 + \delta^{-1} h^2
+ \delta \left( \|\mcv - \mcv_h^l\|^2_\Sigma
+ \tn \mcv_h \tn_{\mcE_h}^2 
+ \tn \mcv_h \tn_{\mcF_h}^2
\right)
\end{equation}
for any $0< \delta\lesssim 1$.

\paragraph{Term $\bfI\bfV$.} Changing domain of integration from
$\Sigma$ to $\Sigma_h$ we obtain
\begin{align}
|IV| %&=  B_h(\mcv_h,w - \mcv_h) - B(\mcv_h^l,w^l - \mcv_h^l)
%\\
&= |(\mcv_h, w - \mcv_h )_\Sigmah - (\mcv_h^l, w^l - \mcv_h^l )_\Sigma|
\\
&= |(( 1 - |B|)\mcv_h, w - \mcv_h )_\Sigmah|
\\
&\lesssim h^2 \|\mcv_h \|_\Sigmah \| w - \mcv_h \|_\Sigmah
\\
&\lesssim h^2 \|\mcv_h \|_\Sigmah
\left( \| w \|_\Sigmah  + \| \mcv_h \|_\Sigmah \right)
\\ \label{TermIV}
&\lesssim h^2
\end{align}
where at last we used the following $L^\infty$ stability
of the interpolation operator
\begin{equation}
\|w\|_\Sigmah
= \|\pi_h H^e \|_\Sigmah
\leq \|\pi_h H^e \|_{L^\infty(\Sigmah)}
\leq \| H^e \|_{L^\infty(U_{\delta_0}(\Sigma))}
\lesssim \| H \|_{L^\infty(\Sigma)}
\lesssim 1
\end{equation}
which holds since $\pi_h$ is a Lagrange interpolation
operator and $\Sigmah \subset U_{\delta_0}(\Sigma)$
in the meshed case and $\cup_{T\in \mcT_h} T \subset
U_{\delta_0}(\Sigma)$ in the cut case, followed by
the stability  (\ref{stability}) of the discrete
curvature vector.

\paragraph{Term $\bfV$.} Adding and subtracting $\mcv^e$ inside 
the jump we obtain
\begin{align}
|V| &= |\tau_{\mcE_h} J_{\mcE_h}(H_h,w) + \tau_{\mcF_h} J_{\mcF_h}(H_h,w)|
\\
&=|\tau_{\mcE_h} J_{\mcE_h}(H_h,w-H^e)+\tau_{\mcE_h} J_{\mcE_h}(H_h,H^e)+\tau_{\mcF_h} J_{\mcF_h}(H_h,w-H^e)|
\\
&\lesssim \delta \left(\tau_{\mcE_h} \tn H_h \tn_{\mcE_h}^2
+ \tau_{\mcF_h} \tn H_h \tn_{\mcF_h}^2 \right)
\\ \nonumber
&\qquad + \delta^{-1} \left(\tau_{\mcE_h} \tn w-H^e \tn_{\mcE_h}^2 + \tau_{\mcE_h} \tn H^e \tn_{\mcE_h}^2
+ \tau_{\mcF_h}\tn w - H^e \tn^2_{\mcF_h} \right)
\\ \label{TermV}
&\lesssim \delta \left(\tau_{\mcE_h} \tn H_h \tn_{\mcE_h}^2
+ \tau_{\mcF_h} \tn H_h \tn_{\mcF_h}^2 \right) 
+ \delta^{-1} h^2
\end{align}
where we used the fact that $J_{\mcF_h}(H_h,H^e)=0,$ since
$[n_F \cdot \nabla H^e]=0$, the interpolation error estimates (\ref{interpoledgenorm}) and (\ref{interpolfacenorm}), and
Lemma \ref{lem:tanjumpsmooth} to estimate $\tn H^e \tn_{\mcE_h}$.

%In order to estimate the remaining term
%$\| H^e \|_{\mcE_h}$ we employ the same notation as in the proof of
%Lemma \ref{lem:edgetermest}. We then have
%\begin{align}
%\| [t_{E} \cdot \nablash \mcv^e ]\|_E
%&=
%\|(t_{E,K_1} + t_{E,K_2}) \cdot \nabla \mcv^e \|_E
%\\
%&\leq \left( \|t_{E,K_1} - t^e\|_{L^\infty(E)}
%        +  \|t^e +t_{E,K_2} \|_{L^\infty(E)} \right)
%\| \nabla \mcv^e\|_{E}
%\\ \label{tanestb}
%&\lesssim h \| \nabla \mcv^e\|_{E}
%\\
%&\lesssim h^{3/2}
%\end{align}
%where we used (\ref{tangenterrorest}) and at last the bound
%$\| \nabla \mcv^e\|_{E}\lesssim h^{1/2} \| \mcv \|_{W^1_\infty(\Sigma)}\lesssim h^{1/2}$. Thus we obtain
%\begin{equation}
%\tn H^e \tn_{\mcE_h}^2 = \sum_{E \in \mcE_h} h \| [t_{E} \cdot \nablash \mcv^e ]\|^2_E
%\lesssim \sum_{E \in \mcE_h} h^4 \lesssim h^2
%\end{equation}
%since $\text{card}(\mcE_h) \lesssim h^{-2}$ both for meshed and
%cut surfaces. We thus arrive at the estimate
%\begin{equation}\label{TermV}
%|V| \lesssim \delta^{-1} h^2 + \delta J_h(H_h,H_h)
%\end{equation}

\paragraph{Conclusion of the proof.} Collecting the estimates
(\ref{TermI}), (\ref{TermII}), (\ref{TermIII}), (\ref{TermIV}), and (\ref{TermV}), of terms $I-V$ we obtain
%\begin{align}\label{FINALLY}
%\|\mcv - \mcv_h^l\|^2_\Sigma
%+ J_h(\mcv_h,\mcv_h)
%&\lesssim h^2 + \delta^{-1} h^2
%\\ \nonumber
%&\qquad + \delta \left( \|\mcv - \mcv_h^l\|^2_\Sigma
%+ J_h(\mcv_h,\mcv_h)
%+ J_{\mcE_h}(\mcv_h,\mcv_h)
%+ J_{\mcF_h}(\mcv_h,\mcv_h)
%\right)
%\end{align}
\begin{align}\nonumber
&\|\mcv - \mcv_h^l\|^2_\Sigma + \tau_{\mcE_h} \tn H_h \tn_{\mcE_h}^2
+ \tau_{\mcE_h} \tn H_h \tn_{\mcF_h}^2
\\ \label{FINALLY}
&\qquad \lesssim h^2 + \delta^{-1} h^2
+ \delta \left( \|\mcv - \mcv_h^l\|^2_\Sigma
+ (1+\tau_{\mcE_h})\tn H_h \tn_{\mcE_h}^2 %J_{\mcE_h}(\mcv_h,\mcv_h)
+ (1+\tau_{\mcF_h})\tn H_h \tn_{\mcF_h}^2 %J_{\mcF_h}(\mcv_h,\mcv_h)
\right)
\end{align}
for any $0<\delta\lesssim 1$. Since $\tau_{\mcE_h}>0$
and $\tau_{\mcF_h}>0$ we may choose $\delta$ small enough
and conclude the proof using kick back argument.
\end{proof}
\begin{thm}\label{thm:jump} In the cut case we may take
to take $\tau_{\mcE_h}=0$ and thus use the simplified 
stabilization term
\begin{equation}
J_h(v,v) = \tau_{\mcF_h} J_{\mcF_h}(v,v)
\end{equation}
\end{thm}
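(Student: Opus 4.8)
The plan is to re-examine the proof of Theorem~\ref{thm:errorest} and verify that every step survives the choice $\tau_{\mcE_h}=0$, the only genuine difficulty being the edge jump that was previously absorbed by the now-vanishing left-hand side term $\tau_{\mcE_h}\tn\mcv_h\tn^2_{\mcE_h}$. First I note that the stability estimate of Lemma~\ref{lem:stab} survives: in its proof the contributions from Terms $I$ and $II$ are controlled by $\delta(\|\mcv_h\|^2_{\Sigmah}+\tn\mcv_h\tn^2_{\mcF_h})$ via Lemmas~\ref{lem:grad}, \ref{lem:trace}, and \ref{lem:tanjumpsmooth} and absorbed using $\tau_{\mcF_h}>0$, while the edge term only enters the left-hand side with the nonnegative factor $\tau_{\mcE_h}$. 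Hence $\|\mcv_h\|^2_{\Sigmah}+\tau_{\mcF_h}\tn\mcv_h\tn^2_{\mcF_h}\lesssim 1$ remains valid. Inspecting the error proof, Terms $I$, $II$, $IV$ and the estimate (\ref{TermIII1}) of $III_1$ never involve $\tn\mcv_h\tn_{\mcE_h}$, and in Term $V$ the edge contribution carries the prefactor $\tau_{\mcE_h}=0$ and drops out. The sole obstacle is the edge contribution to Term $III_2$, which in (\ref{III2a}) generated $\delta\tn\mcv_h\tn^2_{\mcE_h}$, previously absorbed by $\tau_{\mcE_h}\tn\mcv_h\tn^2_{\mcE_h}$.

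I would re-estimate this edge contribution by exploiting the $O(h^2)$ geometric smallness of $x_\Sigma^e-x_{\Sigmah}$ rather than a generic weight. By Cauchy--Schwarz,
\[
\Big|\sum_{E\in\mcE_h}(x_\Sigma^e-x_{\Sigmah},[t_E\cdot\nablash(\mcv^e-\mcv_h)])_E\Big|
\leq\Big(\sum_{E\in\mcE_h}h^{-1}\|x_\Sigma^e-x_{\Sigmah}\|^2_E\Big)^{1/2}\tn\mcv^e-\mcv_h\tn_{\mcE_h}
\lesssim h\,\tn\mcv^e-\mcv_h\tn_{\mcE_h},
\]
where the first factor is $\lesssim h$ by Lemma~\ref{lem:geomapprox} and $\text{card}(\mcE_h)\lesssim h^{-2}$. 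The key point is to bound the second factor by a small quantity, not by the $O(1)$ stability bound. Splitting $\mcv^e-\mcv_h=(\mcv^e-\pi_h\mcv^e)+(\pi_h\mcv^e-\mcv_h)$, the smooth part satisfies $\tn\mcv^e-\pi_h\mcv^e\tn_{\mcE_h}\lesssim h$ by (\ref{interpoledgenorm}), and for the discrete part $\pi_h\mcv^e-\mcv_h\in V_h$ I apply Lemma~\ref{lem:edgetermest},
\[
\tn\pi_h\mcv^e-\mcv_h\tn_{\mcE_h}\lesssim\|\pi_h\mcv^e-\mcv_h\|_{\Sigmah}+\tn\pi_h\mcv^e-\mcv_h\tn_{\mcF_h}.
\]
Here $\|\pi_h\mcv^e-\mcv_h\|_{\Sigmah}\lesssim\|\mcv^e-\pi_h\mcv^e\|_{\Sigmah}+\|\mcv^e-\mcv_h\|_{\Sigmah}\lesssim h^2+\|\mcv-\mcv_h^l\|_\Sigma$ by (\ref{interpol}) and the norm equivalence (\ref{eq:normequ}), while $\tn\pi_h\mcv^e-\mcv_h\tn_{\mcF_h}\lesssim\tn\mcv^e-\pi_h\mcv^e\tn_{\mcF_h}+\tn\mcv_h\tn_{\mcF_h}\lesssim h+\tn\mcv_h\tn_{\mcF_h}$ by (\ref{interpolfacenorm}) and the fact that $\tn\mcv^e\tn_{\mcF_h}=0$ since $[n_F\cdot\nabla\mcv^e]=0$.

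Collecting these bounds gives $\tn\mcv^e-\mcv_h\tn_{\mcE_h}\lesssim h+\|\mcv-\mcv_h^l\|_\Sigma+\tn\mcv_h\tn_{\mcF_h}$, so the edge contribution to $III_2$ is $\lesssim h\big(h+\|\mcv-\mcv_h^l\|_\Sigma+\tn\mcv_h\tn_{\mcF_h}\big)\lesssim h^2+\delta^{-1}h^2+\delta\|\mcv-\mcv_h^l\|^2_\Sigma+\delta\tn\mcv_h\tn^2_{\mcF_h}$ after a final Young inequality. Every term is either $O(h^2)$ or absorbable into the left-hand side $\|\mcv-\mcv_h^l\|^2_\Sigma+\tau_{\mcF_h}\tn\mcv_h\tn^2_{\mcF_h}$ for $\delta$ small, and the elementwise (Laplacian) part of $III_2$ together with Terms $I$, $II$, $III_1$, $IV$, $V$ is bounded exactly as in Theorem~\ref{thm:errorest}. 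A kick-back argument then yields (\ref{l2bound}) with $\tau_{\mcE_h}=0$.

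The main obstacle is precisely this edge term: applying Lemma~\ref{lem:edgetermest} directly to $\mcv_h$ would produce $\delta\|\mcv_h\|^2_{\Sigmah}\sim\delta$, which is not $O(h^2)$ and would degrade the estimate to half order. The remedy is to apply the lemma to the discrete difference $\pi_h\mcv^e-\mcv_h$, whose $L^2$-norm is governed by the small interpolation and consistency errors, while retaining the $O(h^2)$ smallness of the geometric defect along the edges instead of splitting it off with a generic weight.
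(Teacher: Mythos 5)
Your proposal is correct and follows essentially the same route as the paper: both hinge on splitting through the interpolant and applying Lemma~\ref{lem:edgetermest} to the discrete difference $\mcv_h-\pi_h\mcv^e$, together with the estimates (\ref{interpoledgenorm}), (\ref{interpolfacenorm}), (\ref{interpol}) and Lemma~\ref{lem:tanjumpsmooth}, to conclude $\tn\mcv^e-\mcv_h\tn_{\mcE_h}\lesssim h+\|\mcv-\mcv_h^l\|_\Sigma+\tn\mcv_h\tn_{\mcF_h}$. The only difference is presentational: you insert this bound directly at the edge contribution to Term $III_2$, whereas the paper states the equivalent bound $\tn\mcv_h\tn^2_{\mcE_h}\lesssim\|\mcv-\mcv_h^l\|^2_\Sigma+\tn\mcv_h\tn^2_{\mcF_h}+h^2$ and absorbs the edge term in the final kick-back estimate (\ref{FINALLY}).
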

\begin{proof} Using Lemma \ref{lem:edgetermest} and 
the interpolation error estimates (\ref{interpol}) and (\ref{interpoledgenorm}) we note that 
in the case of a cut surface we have the estimate
\begin{align}
\tn \mcv_h \tn^2_{\mcE_h} &\lesssim
\tn \mcv_h - \mcv^e \tn^2_{\mcE_h} + \tn \mcv^e \tn^2_{\mcE_h}
\\
&\lesssim \tn \mcv_h - \pi_h \mcv^e \tn^2_{\mcE_h}
+ \tn \pi_h \mcv^e - \mcv^e \tn^2_{\mcE_h}
+ \tn \mcv^e \tn^2_{\mcE_h}
\\
&\lesssim \|\mcv_h - \pi_h \mcv^e\|^2_{\Sigmah}
+ \tn \mcv_h - \pi_h \mcv^e \tn_{\mcF_h}^2 + h^2
\\
&\lesssim \|\mcv_h^l - \mcv\|^2_{\Sigma}
+ \tn \mcv_h \tn_{\mcF_h}^2 + h^2
\end{align}
where we finally used the interpolation estimates (\ref{interpol}) 
and (\ref{interpolfacenorm}). In view of the final estimate 
(\ref{FINALLY}) in the above proof, we conclude that in the 
cut case it is enough to use the simplified stabilization term
\begin{equation}
J_h(v,v) = \tau_{\mcF_h} J_{\mcF_h}(v,v)
\end{equation}
since the kick back term may be estimated as follows
\begin{align}\nonumber
&\|\mcv - \mcv_h^l\|^2_\Sigma
+ (1+\tau_{\mcE_h})\tn \mcv_h \tn_{\mcE_h}
+ (1+\tau_{\mcF_h})\tn \mcv_h \tn_{\mcF_h}
\\
&\qquad
\lesssim
\|\mcv - \mcv_h^l\|^2_\Sigma
+ (1+\tau_{\mcF_h})\tn \mcv_h\tn_{\mcF_h}
\end{align}
\end{proof}

\section{Numerical Examples}

\subsection{Triangulated Surfaces}

We consider a torus with Cartesian coordinates given by a map
from a reference coordinate system $(\theta,\varphi)$ representing
angles, $0\leq\varphi < 2\pi$, $0\leq\theta < 2\pi$:
\begin{equation}
\begin{cases}
x &  =(R+r\cos\varphi)\cos\theta\\
y & =(R+r\cos\varphi)\sin\theta\\
z & =r \sin\varphi
\end{cases}
\end{equation}
where $r$ is the radius of the tube bent into a torus and $R$ is the distance from the center line of the tube to the center of the torus.
The mean curvature is then given by
\[
H=-\frac{R+2 r \cos\varphi}{2 r (R+r \cos\varphi)}
\]
and we consider $R=1$, $r=1/2$, in our example.
%\[
%n = \left(2 x- 2 R x /\sqrt{x^2+y^2}, 2 y- 2 R y /\sqrt{x^2+y^2}, 2 z\right)
%\]

Our numerical results show that convergence of the mean curvature vector
is strongly dependent on stabilization. We compare three different meshes
on the torus, one sequence of structured meshes, Figure \ref{structmesh},
one where the diagonals are randomly flipped in the structured mesh,
Figure \ref{swapped}, and one where the nodes have been moved randomly, creating an unstructured mesh, Figure \ref{unstruct}.

In Figure \ref{convtorus} we show the discrete convergence $\|\pi_h \mcv- \mcv_h\|_{\Sigma_h}$, where $\pi_h \mcv$ is the nodal interpolant, for sequences of meshes of the type just described. The stabilization parameter was chosen as $\tau_{\mcE_h} = 1/10$ and the mesh size parameter $h={N}^{-1/2}$ where $N$ denotes the number of nodes in the mesh. We note that the structured mesh does not need stabilization whereas stability is lost even for the minor modification of flipping diagonals.
In Figure \ref{torus1}--\ref{torus2} we show iso--plots of the solution
for the structured mesh with flipped diagonals with and without stabilization. The instability of the computed curvature without added stabilization is clearly visible. We also note that the convergence rate is
higher than predicted by the theory. This may expected in view of the fact
that we have super convergence of second order on the structured mesh and
then loss of order is dependent on the perturbations of the mesh.

\subsection{Cut Level Set Surfaces}
We consider the same example as above. A structured mesh $\mcT_h$, consisting of tetrahedra, on the domain $[-1.6, 1.6] \times [-1.6, 1.6] \times [-0.6,0.6]$ is generated independently of the position of the
torus. The mesh size parameter is defined by $h=1/N^\frac{1}{3}$
where $N$ denotes the total number of nodes in the mesh. The signed
distance function of the torus $\Sigma$ is given by
\begin{equation}
\rho=\lp z^2+\lp (x^2+y^2)^{1/2}-R \rp^2\rp^{1/2}-r
\end{equation}
where we again choose $R=1$ and $r=1/2$. We construct an approximate
distance function $\rho_h$ using the nodal interpolant $\pi_h \rho$
on the background mesh and let $\Sigmah$ be the zero levelset of $\rho_h$.

We compare our approximation of the mean curvature vector with the exact mean curvature vector  $\mcv^e=-\lp \Delta\rho \rp \nabla \rho$.  Also in
this case the convergence of the mean curvature vector is strongly
dependent upon stabilization. In our example the stabilization parameters were chosen as $\tau_{\mcE_h} = 0$ and $\tau_{\mcF_h}=1/10$. Recall that
for a cut surface we may take $\tau_{\mcE_h}=0$, see Theorem \ref{thm:jump}. The resulting surface mesh $\mcK_h$ on $\Sigma_h$
is shown in Figure~\ref{meshtorus} and in Figure \ref{convcuttorus} we
show the error in the $L^2$-norm. We note that we also in this case
obtain higher order convergence rate (approximately 1.3) than predicted 
by the theory.

%\subsection{Cut Level set Surfaces}
%We consider the same example as above, i.e. we compute the mean curvature vector of a torus. The signed distance function $\rho=\sqrt{z^2+\lp \sqrt{x^2+y^2}-R \rp^2}-r$, where $R=1$, $r=1/2$. The computational domain is $[-1.6, 1.6] \times [-1.6, 1.6] \times [-0.6, 0.6]$.  We compare our approximation of the mean curvature vector with the solution $\mcv=\lp \nabla\cdot \nabla \rho\rp \nabla \rho$.  The convergence of the mean curvature vector is heavily dependent upon stabilization. The stabilization parameter was chosen as $\gamma = 10$ and the mesh size parameter $h=1/N^\frac{1}{3}$ where $N$ denotes the number of nodes in the mesh. A triangulation of $\Sigma_h$ is shown in Fig.~\ref{meshtorus}. In Fig.~\ref{convcuttorus} we show the error in the $L_2$-norm and the following mesh dependent norm
% \begin{align}
% %\|\bfkappa - \bfkappa_h^l\|_{\Sigma_h}  \nonumber \\
%\tn \mcv - \mcv_h \tn^2_{\Sigma_h}= \|\bfkappa - \bfkappa_h^l\|^2_{\Sigma_h} + \sum_{E\in \mcE} h \|[\bfn_E \cdot \nabla \bfkappa_h]\|^2_E
%\end{align}

\bibliographystyle{plain}
\bibliography{MeanCurv}

\begin{thebibliography}{10}

\bibitem{BoKoPaAlLe10}
M.~Botsch, L.~Kobbelt, M.~Pauly, P.~Alliez, and B.~L'evy.
\newblock {\em Polygon Mesh Processing}.
\newblock A. K. Peters, Ltd., Natick, MA, 2010.

\bibitem{BoSo08}
M.~Botsch and O.~Sorkine.
\newblock On linear variational surface deformation methods.
\newblock {\em IEEE Transactions on Visualization and Computer Graphics},
  14(1):213--230, 2008.

\bibitem{BuHaLa13}
E.~Burman, P.~Hansbo, and M.~G. Larson.
\newblock A well conditioned cut finite element method for second order partial
  differential equations on surfaces. {P}art {I}: The {L}aplace--{B}eltrami
  operator.
\newblock Technical report, arXiv:1312.1097, 2013.

\bibitem{CeHaLa14}
M.~Cenanovic, P.~Hansbo, and M.~G. Larson.
\newblock Minimal surface computation using a finite element method on an
  embedded surface.
\newblock Technical report, arXiv:1403.3535, 2014.

\bibitem{De09}
A.~Demlow.
\newblock Higher order finite element methods and pointwise error estimates for
  elliptic problems on surfaces.
\newblock {\em SIAM J. Numer. Anal.}, 47(2):805--827, 2009.

\bibitem{DeMeScBa99}
M.~Desbrun, M.~Meyer, P.~Schr\"{o}der, and A.~H. Barr.
\newblock Implicit fairing of irregular meshes using diffusion and curvature
  flow.
\newblock In {\em Proceedings of the 26th Annual Conference on Computer
  Graphics and Interactive Techniques}, SIGGRAPH '99, pages 317--324, New York,
  NY, USA, 1999. ACM Press/Addison-Wesley Publishing Co.

\bibitem{Dz91}
G.~Dziuk.
\newblock An algorithm for evolutionary surfaces.
\newblock {\em Numer. Math.}, 58(6):603--611, 1991.

\bibitem{Dz08}
G.~Dziuk.
\newblock Computational parametric {W}illmore flow.
\newblock {\em Numer. Math.}, 111(1):55--80, 2008.

\bibitem{GiTr01}
David Gilbarg and Neil~S. Trudinger.
\newblock {\em Elliptic partial differential equations of second order}.
\newblock Classics in Mathematics. Springer-Verlag, Berlin, 2001.
\newblock Reprint of the 1998 edition.

\bibitem{GrRe07}
S.~Gross and A.~Reusken.
\newblock Finite element discretization error analysis of a surface tension
  force in two-phase incompressible flows.
\newblock {\em SIAM J. Numer. Anal.}, 45(4):1679--1700 (electronic), 2007.

\bibitem{HaHaLa03}
A.~Hansbo, P.~Hansbo, and M.~G. Larson.
\newblock A finite element method on composite grids based on {N}itsche's
  method.
\newblock {\em M2AN Math. Model. Numer. Anal.}, 37(3):495--514, 2003.

\bibitem{HilPolWar06}
K.~Hildebrandt, K.~Polthier, and M.~Wardetzky.
\newblock On the convergence of metric and geometric properties of polyhedral
  surfaces.
\newblock {\em Geom. Dedicata}, 123:89--112, 2006.

\bibitem{MeyDesSchBa03}
M.~Meyer, M.~Desbrun, P.~Schr{\"o}der, and A.~H. Barr.
\newblock Discrete differential-geometry operators for triangulated
  2-manifolds.
\newblock In {\em Visualization and mathematics {III}}, Math. Vis., pages
  35--57. Springer, Berlin, 2003.

\bibitem{Wl87}
J.~Wloka.
\newblock {\em Partial differential equations}.
\newblock Cambridge University Press, Cambridge, 1987.
\newblock Translated from the German by C. B. Thomas and M. J. Thomas.

\bibitem{Xu13}
G.~Xu.
\newblock Consistent approximations of several geometric differential operators
  and their convergence.
\newblock {\em Appl. Numer. Math.}, 69:1--12, 2013.

\end{thebibliography}

\begin{figure}[h]
\begin{center}
\includegraphics[height=9cm]{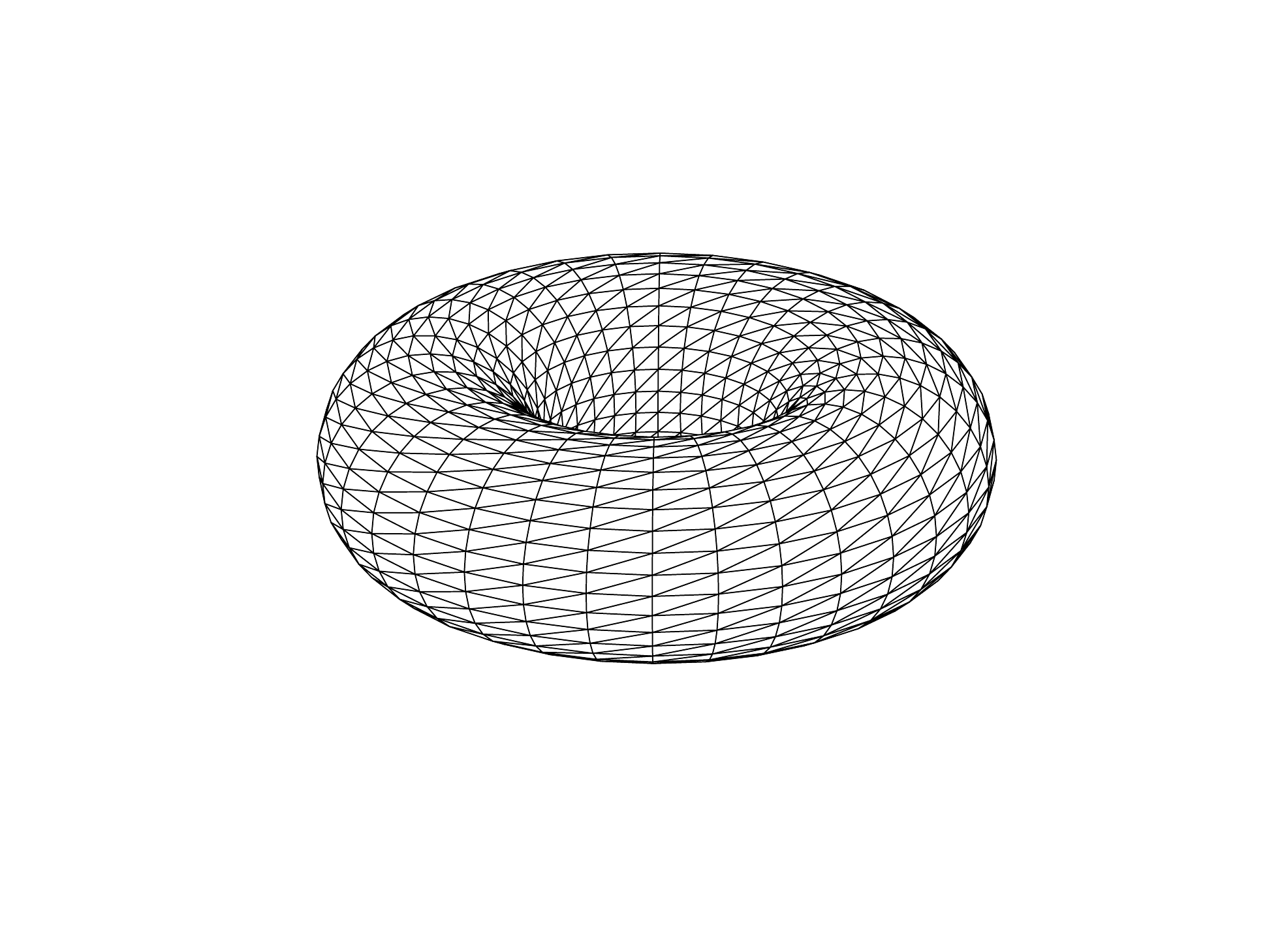}
\end{center}
\caption{Structured mesh.\label{structmesh}}
\end{figure}
\begin{figure}[h]
\begin{center}
\includegraphics[height=9cm]{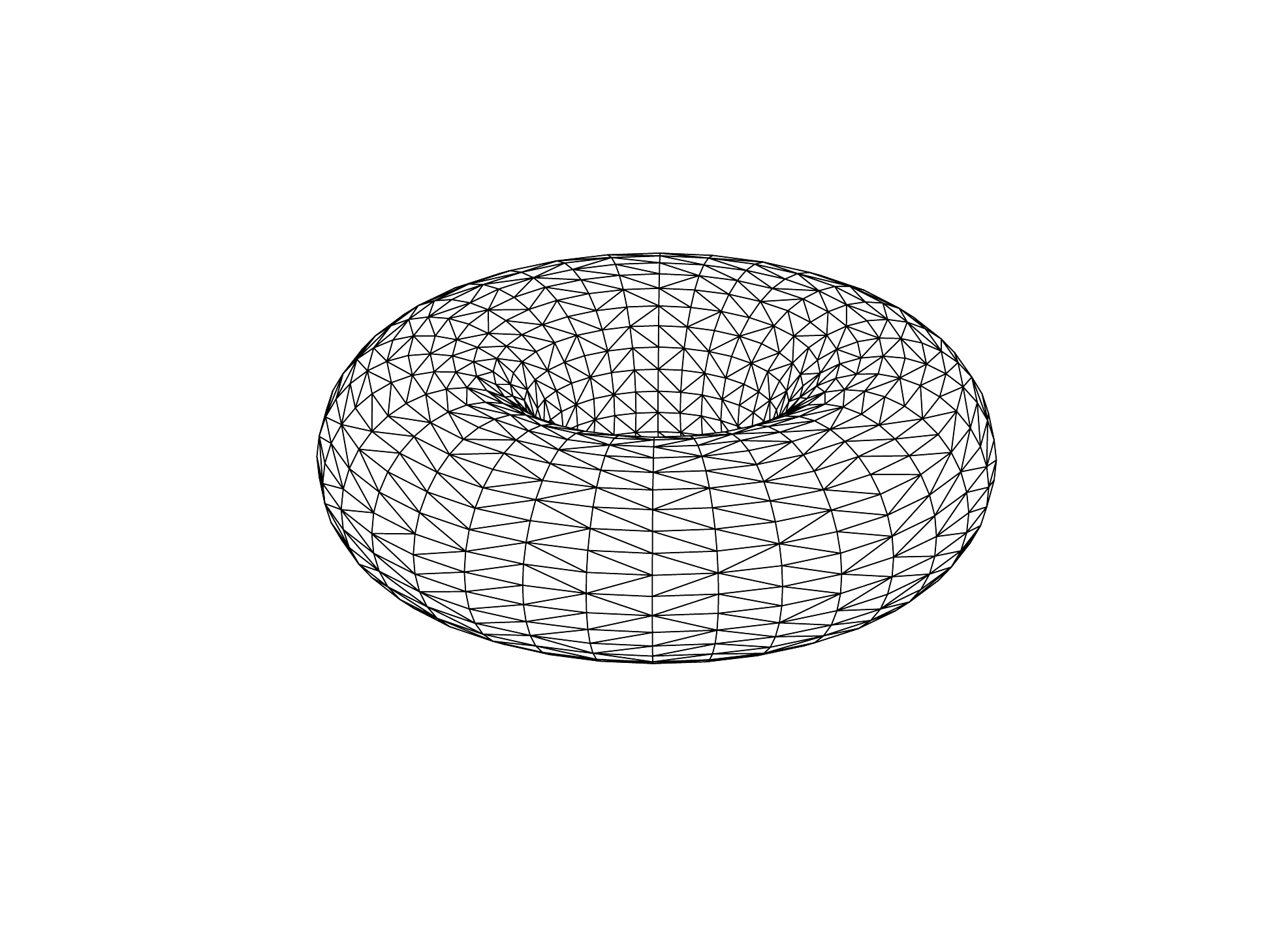}
\end{center}
\caption{Structured mesh with flipped diagonals.\label{swapped}}
\end{figure}
\begin{figure}[h]
\begin{center}
\includegraphics[height=9cm]{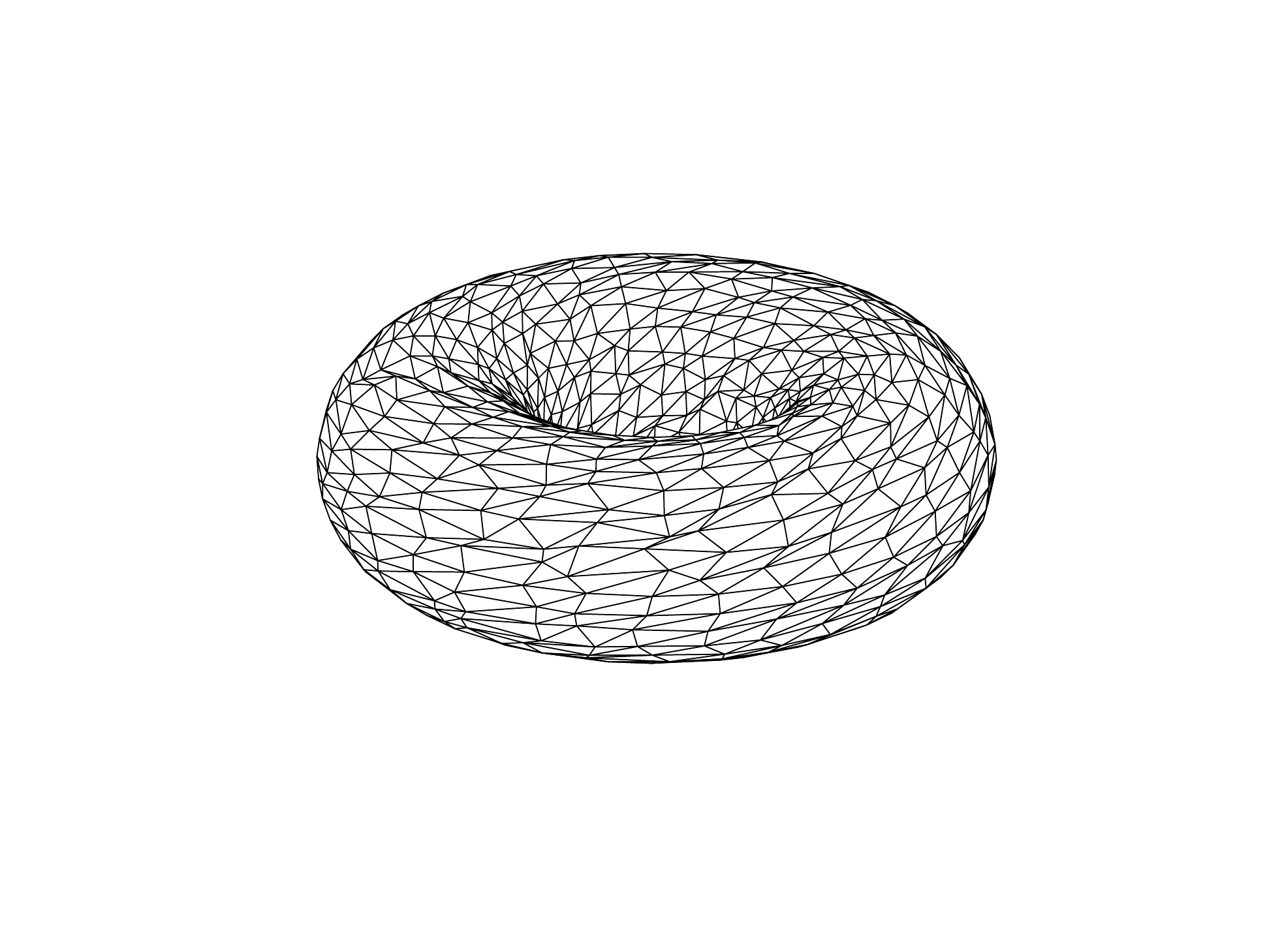}
\end{center}
\caption{Unstructured mesh.\label{unstruct}}
\end{figure}
\begin{figure}[h]
\begin{center}
\includegraphics[width=0.9\textwidth]{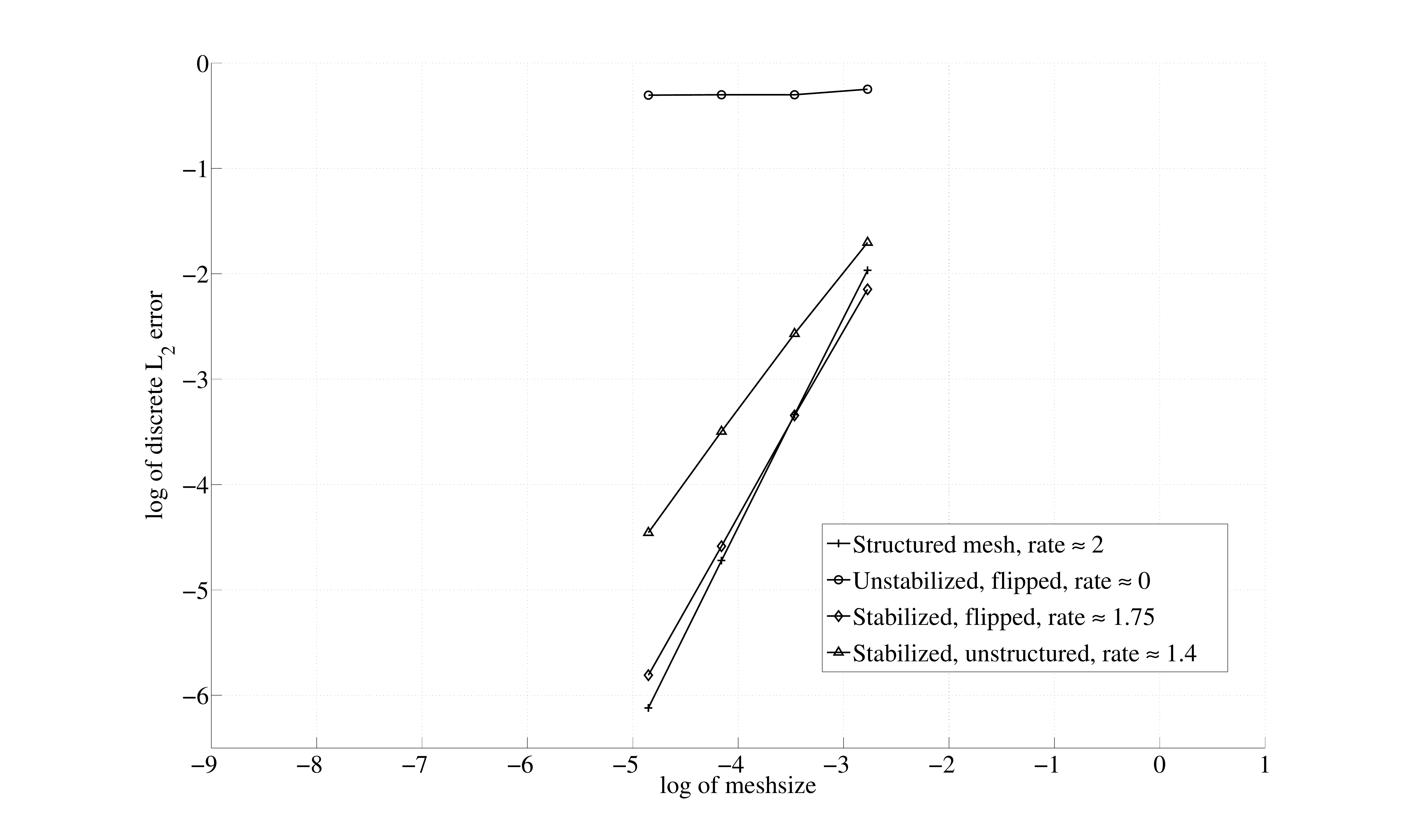}
\end{center}
\caption{Convergence curves and rates of the discrete error.\label{convtorus}}
\end{figure}
\begin{figure}[h]
\begin{center}
\includegraphics[width=0.8\textwidth]{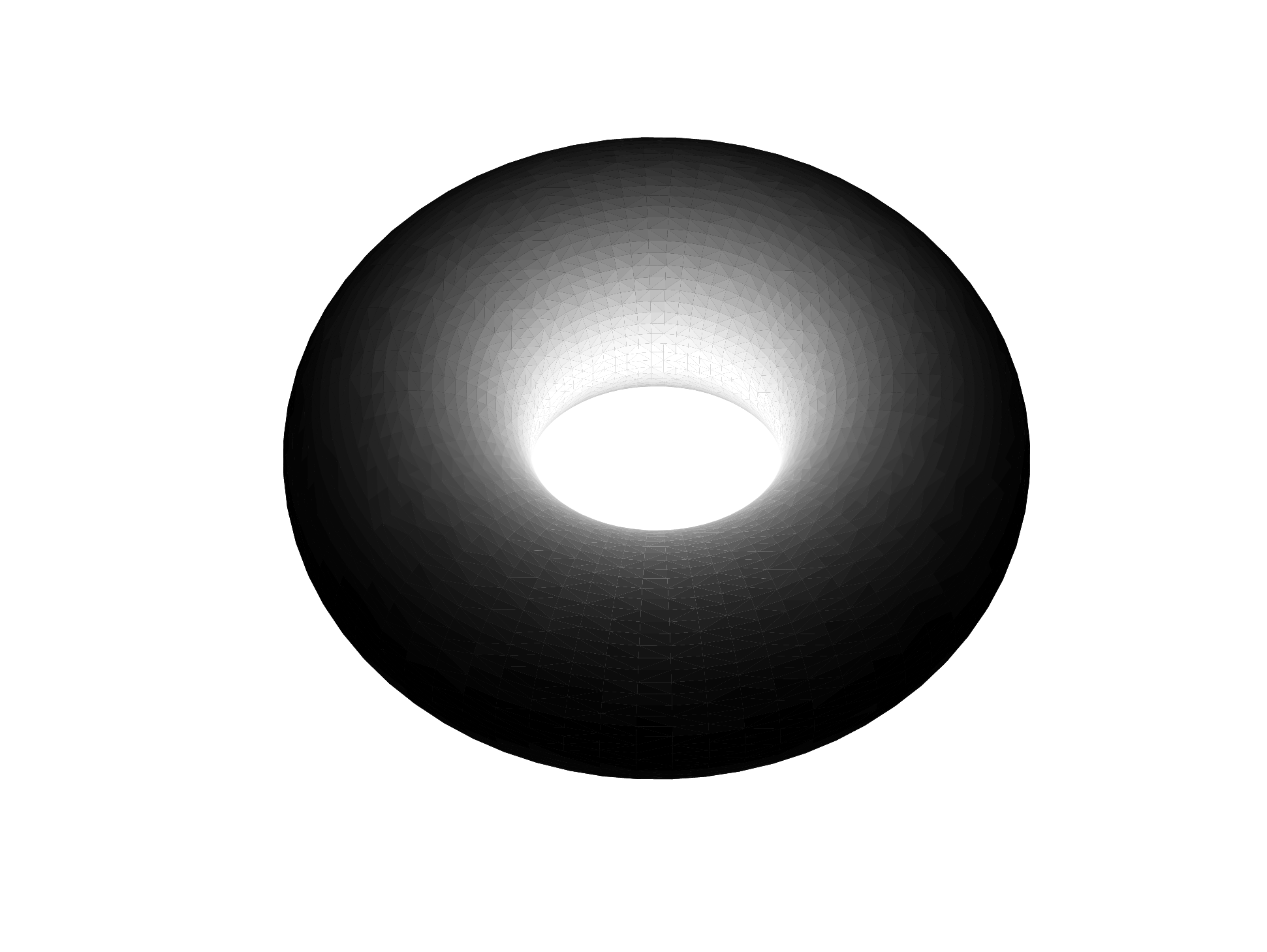}
\end{center}
\caption{Isolevels of the computed curvature, stabilized case.\label{torus1}}
\end{figure}
\begin{figure}[h]
\begin{center}
\includegraphics[width=0.8\textwidth]{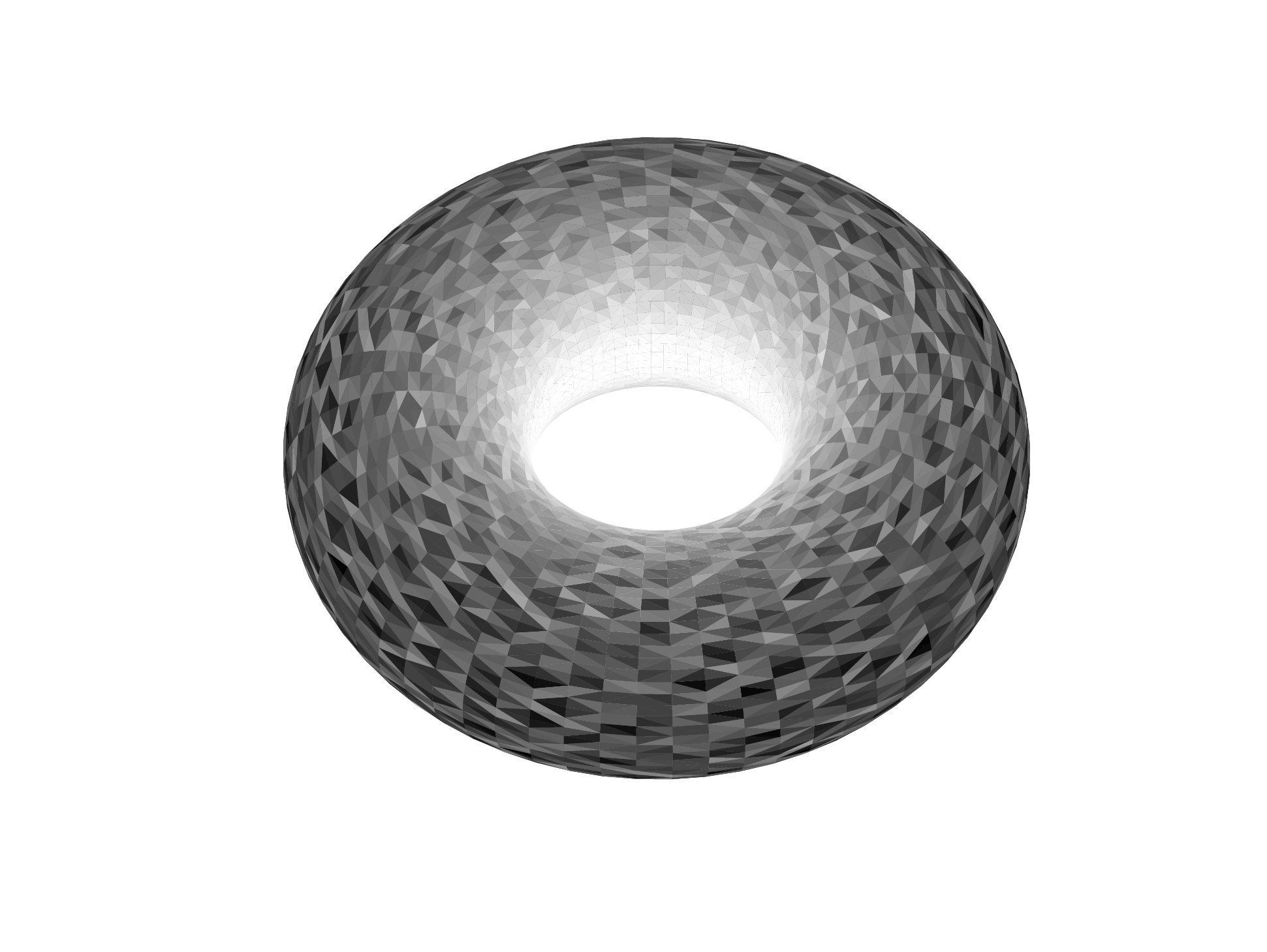}
\end{center}
\caption{Isolevels of the computed curvature, unstabilized case.\label{torus2}}
\end{figure}

\begin{figure}[h]
\begin{center}
\includegraphics[width=0.5\textwidth]{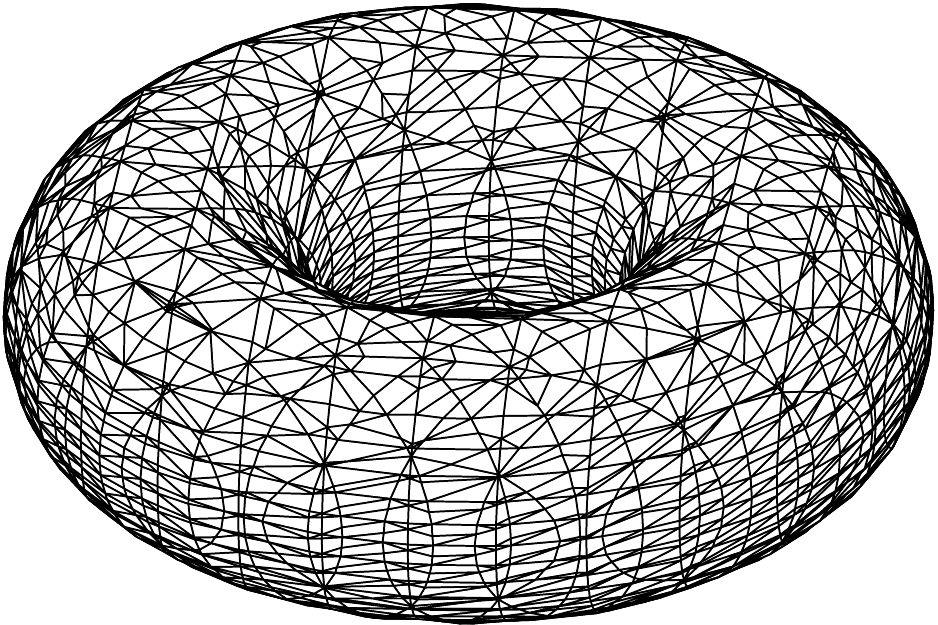}
%[height=9cm]
\end{center}
\caption{The induced triangulation of $\Sigma_h$. \label{meshtorus}}
\end{figure}

\begin{figure}[h]
\begin{center}
\includegraphics[width=0.6\textwidth]{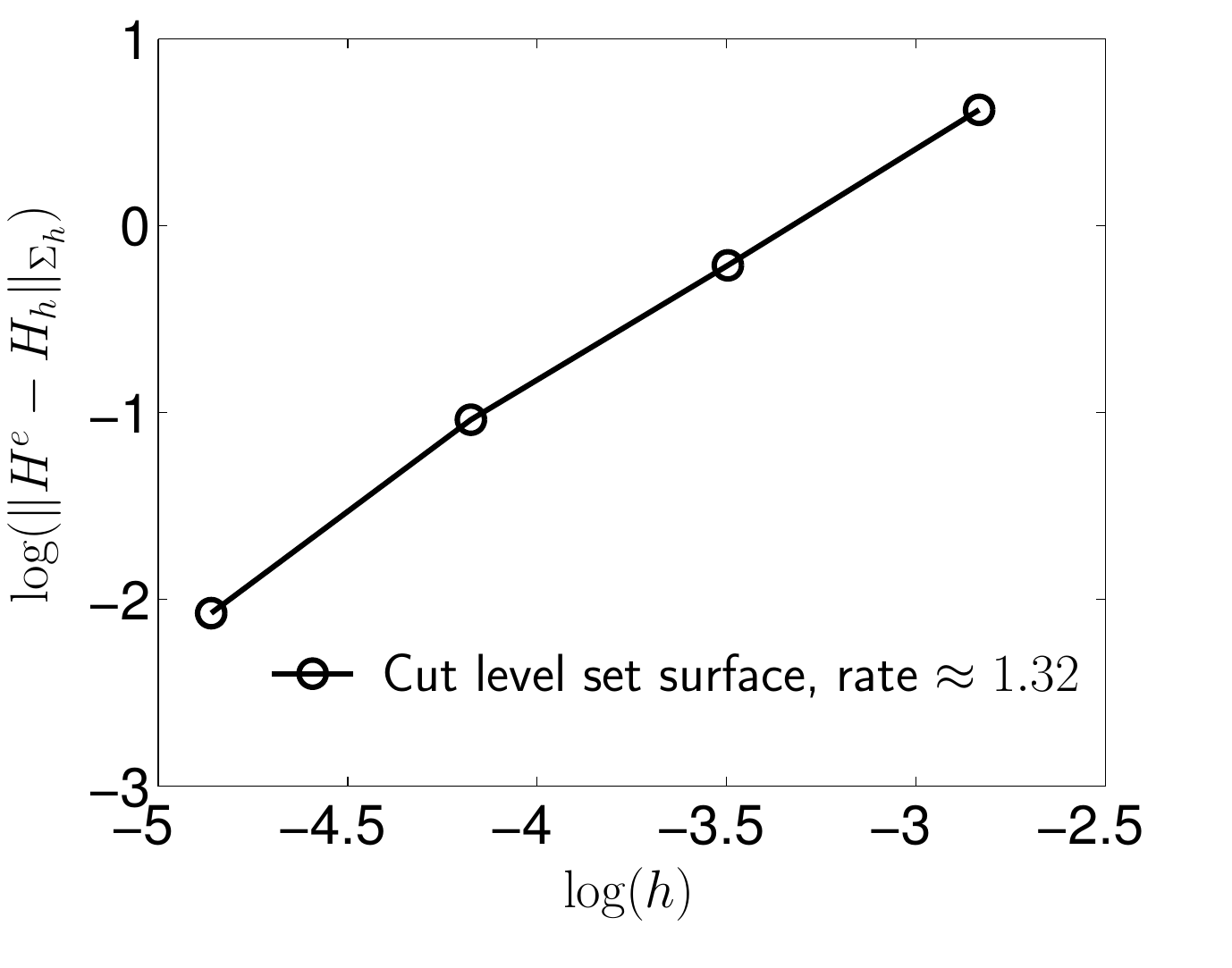}
%[height=10cm]
\end{center}
\caption{The error in the mean curvature vector for different mesh sizes. \label{convcuttorus}}
\end{figure}

%\section{Conclusions}
\end{document}